\newtheorem{theorem}{Theorem}[section]
\newtheorem{proposition}{Proposition}[section]
\newtheorem{lemma}{Lemma}[section]
\newtheorem{exo}{Exercise}[section]
\newtheorem{ex}{Example}[section]
\newtheorem{question}{Question}[section]
\title{{\em From Black-Scholes and
    Dupire formulae to last passage times of local martingales} \\ \bigskip
  Part B : The finite time horizon}
\author{Amel Bentata and Marc Yor}
\date{July, $2^{nd}$, 2008}
\author{Amel Bentata\footnote{Universit\'e Paris 6, Laboratoire de Probabilit\'es et
    Mod\`eles Al\'eatoires, CNRS-UMR 7599, 16, rue Clisson, 75013 Paris Cedex,
    France} and Marc Yor\footnote{Universit\'e Paris 6, Laboratoire de Probabilit\'es et
    Mod\`eles Al\'eatoires, CNRS-UMR 7599, 16, rue Clisson, 75013 Paris Cedex,
    France-Institut Universitaire de France}}
\begin{document}

\setcounter{section}{5}

\maketitle
\begin{enumerate}
\item[1.] 
These notes are the second half of the contents of the course given by the
  second author at the Bachelier Seminar (8-15-22 February 2008) at IHP.
  They also correspond to topics studied by the first author for her Ph.D.thesis.
\item[2.]

Unlike Part A of the course (\cite{amel0}), this document still raises a number of questions, pertaining to various extensions of the classical Black-Scholes formula.
\item[3.]

Comments are welcome and may be addressed to : \\bentata@clipper.ens.fr.
\end{enumerate}
\newpage
\tableofcontents
\newpage

\section*{A rough description of Part B of the course}
In Part B of the Course, we further our discussion of relations bewteen Black-Scholes type formulae and distributions of last times, begun in Notes 1 to 5 of Part A (\cite{amel0}).

In Note $6$, following a suggestion by J. Akahori, we consider, instead of
the last passage times $\mathcal{G}_K$, which ``look into the filtration up
to time $\infty$'', the times : $g_{l}^{(\nu)}(t)=\sup\{s\leq t,B_s^{(\nu)}=l\}$\footnote{In general, we use capital letters such as $K$ for ``geometric type'' objects, and small letters such as $l$ for items related to Brownian motion with drift, with the correspondence : $l=\ln{K}$.} which present the advantage, although they are not stopping times, to look into the filtration only up to the finite horizon $t$. Computing the associated Azéma's supermartingale : $Z_{s,t}=\mathbb{P}\left(g_{l}^{(\nu)}(t)> s|\mathcal{F}_s\right)$, for $s<t$, leads us to study (super-)martingales with respect to the two-parameter filtration $\mathcal{F}_{s,t}=\sigma\{B_u^{(\nu)}, u\leq s, B_v^{(\nu)}, v\geq t\}$, $0<s<t<\infty$.

In Note $7$, we no longer make the assumption that $(M_t)$ is continuous, and we discuss which of the results from the previous notes still hold, or how they may be modified. In particular, some of our discussion applies to the Esscher martingales : $\mathcal{E}_t^{\psi}=\exp{\left(\lambda X_t-t\psi(\lambda)\right)}$, $t\geq 0$, associated with a Lévy process $(X_t)$ with some exponential moments.

In Note $8$, we discuss a recent result by P. Carr, C. Ewald and Y. Xiao asserting that $\mathbb{E}\left[\left(\frac{1}{T}A_T-K\right)^+\right]$ is increasing in $T$, when $A_t=\int_0^tds\,\exp{\left(B_s-\frac{s}{2}\right)}$; clearly, our previous discussion does not apply there.

In Note $9$, we examine how careful one should be when changing ``puts'' into ``calls'', and examining the modifications one should make in our previous discussions.

Note 10 concludes : a number of our results may be stated in quite
some generality, at the cost of introducing certain conditional
expectations, which, in the Markovian context, are easily computed,
but in the general martingale framework, bring some fundamental
``unknowns'' into the discussion. Thus, this case study may be seen as a further illustration of the passage from a Markovian discussion to a ``general theory of stochastic processes'' type discussion.

Finally, an appendix, presents a number of developments/ solutions for some of the exercises suggested in the different notes of both Part A and Part B.

Throughout the text, we may refer to some of the results of Part A; when we do so we write e.g : $A-(65)$ for the equation $(65)$ in Part A.
\newpage

\section{Note 6 : Working under a finite horizon}
This note aims to counterbalance the difficulty of bringing into the
picture quantities such as $\mathcal{G}_K=\sup\{t,M_t=K\}$ which take into
account the whole history of $M$ (or $(\mathcal{F}_t)$).

Thus, we are now interested in finding formulae up to finite maturity,
which may replace our previous formula :
\begin{equation}
\mathbb{P}\left(\mathcal{G}_K\leq t|\mathcal{F}_t\right)=\left(1-\frac{M_t}{K}\right)^{+},
\end{equation}
when $M\in\mathcal{M}_0^{+}$, or even the more general\footnote{We often
  write $\mathbb{E}\left[\Lambda; X\right]$ for $\mathbb{E}\left[1_{\Lambda}X\right]$} :
\begin{equation}
\mathbb{E}\left[\left(\mathcal{G}_K\leq t\right); \left(K-M_\infty\right)^{+})\right]=\mathbb{E}\left[\left(K-M_t\right)^{+}\right]
\end{equation}
The following developments are most recent\footnote{They were very much motivated by some computations of Akahori-Imamura-Yano \cite{yuri}, which Madan-Roynette-Yor \cite{madyorroy4} have later shown to match formula (\ref{eq84}) below.}, therefore we shall study
only the particular case where :
$M_t=\mathcal{E}_t=\exp{\left(B_t-\frac{t}{2}\right)}$, and $\mathcal{G}_K$ is
replaced by $\mathcal{G}_K(t)=\sup\{s\leq t,\mathcal{E}_s=K\}$.
\subsection{Past-future (sub)martingales}
 Rather than starting with some ``explicit'' computation about $\mathcal{G}_K(t)$, it seems of interest (see below !) to introduce the past-future filtration of Brownian motion :
\[\mathcal{F}_{s,t}=\sigma\{B_u,u\leq s,B_v,v\geq t\},\]
indexed by pairs $(s,t)$, with $0\leq s\leq t<\infty$.

Note that, for $s=t$, $\mathcal{F}_{s,s}=\mathcal{F}_\infty$, and that, if $[s,t]\subset[s',t']$, then : $\mathcal{F}_{s,t}\supseteq\mathcal{F}_{s',t'}$.

In this note, we shall naturally deal with :
\renewcommand{\theenumi}{\roman{enumi}.}
\renewcommand{\labelenumi}{\theenumi}
\begin{enumerate}
\item $(\mathcal{F}_{s,t})$ positive submartingales, i.e : $\mathbb{R}^{+}$ valued, $(\mathcal{F}_{s,t})$ adapted processes $(\Sigma_{s,t})$ such that : 
\begin{equation}\label{6eq1}
\mathbb{E}\left[\Sigma_{s,t}|\mathcal{F}_{s',t'}\right] \geq \Sigma_{s',t'};
\end{equation}
\item $(\mathcal{F}_{s,t})$ positive martingales, i.e :
\begin{equation}\label{6eq2}
\mathbb{E}\left[M_{s,t}|\mathcal{F}_{s',t'}\right]=M_{s',t'}.
\end{equation} 
\end{enumerate}
Here are some particular examples of such processes :
let $\Gamma$ be a Borel set in $\mathbb{R}$, and define :
\begin{equation}
\Sigma_{s,t}^{\Gamma}=\mathbb{P}\left(\left\{\forall u\in (s,t),B_u\in\Gamma\right\}|\mathcal{F}_{s,t}\right),
\end{equation} 
and the sets :
\[A_{s,t}^{\Gamma}=\{\forall u\in(s,t),B_u\in \Gamma\}.\]
They satisfy : $A_{s,t}^{\Gamma}\supset A_{s',t'}^{\Gamma} $,
which implies that $\Sigma_{s,t}^{\Gamma}$ satisfies (\ref{6eq1}).

Moreover, from the Markov property of Brownian motion, there exists a function $f^\Gamma(s,t;x,y)$ such that :
\begin{equation}\label{6eq4}
\Sigma_{s,t}^{\Gamma}=f^\Gamma(s,t;B_s,B_t).
\end{equation}
We shall call such functions $f$ (depending on the 4 arguments) past-future subharmonic functions ($PFS$-functions).
Naturally, we may also be interested in past-future harmonic functions $h(s,t;x,y)$ ($PFH$-functions) such that :
\[\Sigma_{s,t}^{h}=h(s,t;B_s,B_t) \quad \mathrm{is\:a}\:
(\mathcal{F}_{s,t})\mathrm{-martingale}.\]
Thanks to basic properties of Brownian motion, i.e : stability by time inversion, that is : if $(B_t,t\geq 0)$ is a Brownian motion starting from $0$, then :
\[(\hat{B}_t=tB_{1/t},t> 0)\]
is also a Brownian motion (which extends to 0, by continuity), and so on, the
following Proposition is easily obtained :
\begin{proposition}\label{timeinver}
\renewcommand{\theenumi}{\roman{enumi}.}
\renewcommand{\labelenumi}{\theenumi}
\begin{enumerate}
\item
If $f(\equiv f(s,t;x,y))$ is $PFH$, resp $PFS$, then $\hat{f}$ is also
$PFH$, resp $PFS$, where :
\begin{equation}
\hat{f}(s,t;x,y)=f(\frac{1}{t},\frac{1}{s},\frac{y}{t},\frac{x}{s})\quad (0<s<t;x,y\in\mathbb{R})
\end{equation}
We also note that the application $f\to\hat{f}$ is involutive, i.e : \[\hat{\hat{f}}=f.\]
\item
If $f$ is $PFH$, then, for any $\nu$, $\lambda\in\mathbb{R}$ :
\begin{equation}
f_{\nu,\,l}(s,t;x,y)\equiv f(s,t;x+\nu s+l,y+\nu t+l)
\end{equation}
is $PFH$.
\item
If $f$ is $PFH$, then for any $a>0$,
\begin{equation}
f_{(a)}(s,t;x,y)\equiv f(a^2s,a^2t;ax;ay)
\end{equation}
is also $PFH$
\end{enumerate}
\end{proposition}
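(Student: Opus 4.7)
The plan is to reformulate the PFH/PFS property via a Brownian bridge mean value property, and then deduce each of the three invariances from a corresponding symmetry of Brownian motion. The key observation underpinning everything is that, conditional on $\mathcal{F}_{s',t'}$, the process $(B_u)_{s' \leq u \leq t'}$ is a Brownian bridge from $B_{s'}$ to $B_{t'}$ on $[s',t']$. Therefore $f$ is PFH (resp. PFS) if and only if, for every $0 < s' \leq s \leq t \leq t'$ and every $x',y' \in \mathbb{R}$,
\[
\mathbb{E}\bigl[f(s,t;X_s,X_t)\bigr] = f(s',t';x',y') \quad (\text{resp. } \geq),
\]
where $X$ is a Brownian bridge on $[s',t']$ from $x'$ to $y'$. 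I will verify each of the three items against this reformulation.

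For (i), the point is the identity of $\sigma$-algebras $\mathcal{F}^B_{s,t} = \mathcal{F}^{\hat B}_{1/t,1/s}$, which follows from $B_u = u\hat B_{1/u}$: the past of $B$ up to time $s$ is exactly the future of $\hat B$ from time $1/s$, and the future of $B$ from $t$ is exactly the past of $\hat B$ up to $1/t$. A direct substitution shows $\hat f(1/t,1/s;\hat B_{1/t},\hat B_{1/s}) = f(s,t;B_s,B_t)$. Since $s' \leq s \leq t \leq t'$ is equivalent to $1/t' \leq 1/t \leq 1/s \leq 1/s'$, the PFH (resp. PFS) property of $f$ with respect to the $(\mathcal{F}^B_{s,t})$-filtration translates, after the relabelling $(\alpha,\beta) = (1/t,1/s)$, into the PFH (resp. PFS) property of $\hat f$ with respect to the $(\mathcal{F}^{\hat B}_{\alpha,\beta})$-filtration; and $\hat B$ is itself a Brownian motion, so we are done. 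The involutivity $\hat{\hat f}=f$ is a direct algebraic check: applying the formula twice, the inner arguments unwind as $(x/s)/(1/s)=x$ and $(y/t)/(1/t)=y$.

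For (ii), I note that if $X$ is a Brownian bridge on $[s',t']$ from $x'$ to $y'$, then $Y_u := X_u + \nu u + l$ is again a Brownian bridge on $[s',t']$, now from $x'+\nu s'+l$ to $y'+\nu t'+l$: the fluctuation part of $Y$ (after subtracting the affine interpolation of its endpoints) coincides with that of $X$, because the added affine function $\nu u + l$ exactly matches its own linear interpolation between $s'$ and $t'$. Applying the bridge characterization of $f$ to $Y$ yields
\[
\mathbb{E}\bigl[f_{\nu,l}(s,t;X_s,X_t)\bigr] = \mathbb{E}\bigl[f(s,t;Y_s,Y_t)\bigr] = f(s',t';x'+\nu s'+l,y'+\nu t'+l) = f_{\nu,l}(s',t';x',y'),
\]
which is the PFH property for $f_{\nu,l}$. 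For (iii), I would invoke Brownian scaling: $W_u := a B_{u/a^2}$ is a Brownian motion. With the change of variables $\sigma = a^2 s$, $\tau = a^2 t$, $\sigma' = a^2 s'$, $\tau' = a^2 t'$ and the identities $aB_s = W_\sigma$, $aB_t = W_\tau$, the required PFH property of $f_{(a)}$ under $B$ at pairs $[s,t] \subset [s',t']$ rewrites as the PFH property of $f$ under $W$ at $[\sigma,\tau] \subset [\sigma',\tau']$, which is precisely the hypothesis.

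There is no substantial obstacle in this plan; the arguments are short once the bridge characterization is in place. The only place that requires a little care is the ordering bookkeeping in (i), where one must check that the inclusion $[s,t] \subset [s',t']$ indexing the PF-filtration of $B$ corresponds, after time inversion, to the inclusion $[1/t,1/s] \subset [1/t',1/s']$ indexing the PF-filtration of $\hat B$; once this is recorded, the martingale (or submartingale) identity transfers without further work.
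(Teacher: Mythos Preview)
Your argument is correct and follows exactly the route the paper has in mind: the paper leaves the proof to the reader after remarking that it follows from the basic invariances of Brownian motion (time inversion, drift/translation via Cameron--Martin or equivalently the bridge law, and scaling), and you have simply written out those three verifications. Your Brownian-bridge reformulation of the PFH/PFS property is the natural way to make the conditioning on $\mathcal{F}_{s',t'}$ explicit, and the bookkeeping in part~(i) (matching $\mathcal{F}^B_{s,t}$ with $\mathcal{F}^{\hat B}_{1/t,1/s}$ and checking that interval inclusions are preserved under $u\mapsto 1/u$) is handled correctly.
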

\begin{proof}
We leave the proof to the reader.
\end{proof}

\subsection{The case of Brownian motion with drift}
\subsubsection{}
Here are some particular examples of the previous discussion, which are closely connected with our random times :
\[g_l^{(\nu)}(t)=\sup\{s\leq t, B_s^{(\nu)}=l\},\]
where : \[B_s^{(\nu)}=B_s+\nu s.\]

Indeed, for $s\leq t$ :
\begin{equation}
\left(g_l^{(\nu)}(t)\leq s\right)=A_{s,t}^{+} + A_{s,t}^{-},
\end{equation}
where :
\begin{equation}
  \begin{cases}
    A_{s,t}^{+} &= \{\forall u \in (s,t), B_u^{(\nu)} >l\}\\
    A_{s,t}^{-} &= \{\forall u \in (s,t), B_u^{(\nu)} <l\}.
  \end{cases}
\end{equation}
Therefore,
\begin{eqnarray*}
  \Sigma_{s,t}^{(l,\,\nu)}&\overset{\underset{\mathrm{def}}{}}{=}&\mathbb{P}\left(g_l^{(\nu)}(t)\leq s|\mathcal{F}_{s,t}\right)\\
  &=&\Sigma_{s,t}^{+}+\Sigma_{s,t}^{-},
\end{eqnarray*}
with 
\begin{equation}
  \begin{cases}
    \Sigma_{s,t}^{+} &= \mathbb{P}\left(A_{s,t}^{+}|\mathcal{F}_{s,t}\right) \\
    \Sigma_{s,t}^{-} &= \mathbb{P}\left(A_{s,t}^{-}|\mathcal{F}_{s,t}\right) .
  \end{cases}
\end{equation}
Our main result here is :
\begin{theorem}\label{theo6}
\begin{equation}\label{eq84}
\Sigma_{s,t}^{(l,\,\nu)}=\left(1-\exp{\left(-\frac{2}{t-s}(B_s^{(\nu)}-l)(B_t^{(\nu)}-l)\right)}\right)^{+}
\end{equation}
\end{theorem}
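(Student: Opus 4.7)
The plan is to reduce the computation to the classical problem of a Brownian bridge avoiding a level, exploiting the very structure of the past-future filtration $\mathcal{F}_{s,t}$.

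First I would invoke the Markov property of Brownian motion to identify the conditional law of $(B_u)_{s \leq u \leq t}$ given $\mathcal{F}_{s,t}$. Since $\mathcal{F}_{s,t}$ is generated by $(B_u)_{u \leq s}$ and $(B_v)_{v \geq t}$, and the middle segment is conditionally independent of these two tails given the endpoint pair $(B_s, B_t)$, the conditional distribution of $(B_u)_{s \leq u \leq t}$ is that of a standard Brownian bridge from $B_s$ to $B_t$ over the interval $[s,t]$. Adding the deterministic drift $\nu u$ then shows that, conditionally, $(B_u^{(\nu)} - l)_{s \leq u \leq t}$ is a Brownian bridge joining $B_s^{(\nu)} - l$ to $B_t^{(\nu)} - l$ in time $t - s$.

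Next I would split into cases according to the sign of $(B_s^{(\nu)}-l)(B_t^{(\nu)}-l)$. When the product is negative, continuity forces a crossing of $l$ and both $\Sigma_{s,t}^{+}$ and $\Sigma_{s,t}^{-}$ vanish; this is consistent with the right-hand side of (\ref{eq84}), since the exponent is then positive, the bracket non-positive, and the positive part zero. When both endpoints lie strictly on the same side of $l$, exactly one of $\Sigma_{s,t}^{+}$ or $\Sigma_{s,t}^{-}$ can be non-zero, and I would invoke the classical non-crossing formula for the Brownian bridge: for $a,b > 0$,
\begin{equation*}
\mathbb{P}\bigl(\text{bridge from } a \text{ to } b \text{ over } [0,T] \text{ stays positive}\bigr) = 1 - \exp\!\left(-\frac{2ab}{T}\right),
\end{equation*}
together with the symmetric statement for $a,b<0$ obtained by $x \mapsto -x$ (which gives the same expression since the product $ab$ is then also positive). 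Applying this with $a = B_s^{(\nu)}-l$, $b = B_t^{(\nu)}-l$, $T = t-s$, directly produces (\ref{eq84}).

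The main hurdle is really the bridge non-crossing formula itself. I would derive it by a reflection argument: the density of unconstrained Brownian paths from $a$ to $b$ over $[0,T]$ that touch zero equals, by reflecting the post-hitting portion, the density of unconstrained paths from $-a$ to $b$ on the same interval; the ratio of these two Gaussian densities is $\exp(-2ab/T)$, and conditioning on the endpoints yields the stated bridge probability. A minor but worth-noting point is that the drift $\nu$ plays no role in the conditional picture: since $\nu u$ is deterministic, it factors out of the probability once one conditions on $(B_s^{(\nu)}, B_t^{(\nu)})$, so no Girsanov-type argument is required.
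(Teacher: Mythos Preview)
Your proposal is correct and follows essentially the same route as the paper: reduce $\Sigma_{s,t}^{(l,\nu)}$ to a Brownian bridge non-crossing probability by using the Markov property and the drift-independence of bridge laws, then apply the classical formula $\mathbb{P}(\text{bridge from }a\text{ to }b\text{ over }[0,T]\text{ avoids }0)=1-\exp(-2ab/T)$ for $ab>0$.

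The only minor difference lies in how the bridge formula itself is justified. You derive it by the standard reflection argument (ratio of Gaussian endpoint densities $p_T(-a,b)/p_T(a,b)=\exp(-2ab/T)$), which is the most direct route. The paper instead offers two alternative derivations: the first uses Seshadri's factorisation $2S_1(S_1-B_1)\overset{\mathrm{law}}{=}\mathbf{e}$ independent of $B_1$ (itself a consequence of reflection), and the second---more original---proceeds by time inversion combined with Doob's maximal identity, recovering the joint law of $(S_1,B_1)$ without invoking reflection directly. Your argument is cleaner for the immediate purpose; the paper's second proof has the separate interest of showing that Doob's maximal identity alone suffices.
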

Thus the function :
\begin{equation}
\begin{split}
f^{(l,\,\nu)}(s,t;x,y)&=\left(1-\exp{\left(-\frac{2}{t-s}(x+\nu s-l)(y+\nu t-l)\right)}\right)^{+}\\
&=1-\exp{\left(-\frac{2}{t-s}\left((x+\nu s-l)(y+\nu t-l)\right)^+\right)}
\end{split}
\end{equation}
is $PFS$.

This leads us naturally to the next theorem :

\begin{theorem}\label{theo7}
The function :
\begin{equation}
h^{(l,\nu)}(s,t;x,y)=\exp{\left(-\frac{2}{t-s}(x+\nu s-l)(y+\nu t-l)\right)}
\end{equation}
is $PFH$.
\end{theorem}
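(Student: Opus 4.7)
The plan is in two steps: first reduce the problem to the case $\nu = 0$, $l = 0$ via Proposition~\ref{timeinver}(ii), then compute the conditional expectation of $h(s,t;B_s,B_t) = \exp(-2B_sB_t/(t-s))$ directly using the Brownian-bridge density and the Chapman--Kolmogorov identity.

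For the reduction, observe that $h^{(l,\nu)}(s,t;x,y) = h(s,t; x+\nu s - l, y + \nu t - l)$ is exactly the image of $h$ under the transformation of Proposition~\ref{timeinver}(ii) with parameters $(\nu, -l)$; so if $h$ is $PFH$, so is $h^{(l,\nu)}$. It remains to show that $M_{s,t} := \exp(-2B_sB_t/(t-s))$ is an $(\mathcal{F}_{s,t})$-martingale. Fix $s' \le s \le t \le t'$. By the Markov property, the joint conditional law of $(B_s, B_t)$ given $\mathcal{F}_{s',t'}$ is the two-dimensional marginal of the Brownian bridge from $B_{s'}$ to $B_{t'}$ on $[s',t']$, with density
\[
\frac{p_{s-s'}(x-B_{s'})\,p_{t-s}(y-x)\,p_{t'-t}(B_{t'}-y)}{p_{t'-s'}(B_{t'}-B_{s'})}, \qquad p_u(z) = \frac{1}{\sqrt{2\pi u}}\exp\!\left(-\frac{z^2}{2u}\right).
\]

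The key algebraic fact is the Gaussian ``reflection identity''
\[
p_{t-s}(y-x)\,\exp\!\left(-\frac{2xy}{t-s}\right) = p_{t-s}(y+x),
\]
which is immediate from $(y-x)^2 + 4xy = (x+y)^2$. Substituting it into $\mathbb{E}[M_{s,t} \mid \mathcal{F}_{s',t'}]$ replaces the factor $p_{t-s}(y-x)$ by $p_{t-s}(y+x)$, and the exponential cancels. After the change of variable $y \mapsto -y$, which turns $p_{t'-t}(B_{t'}-y)$ into $p_{t'-t}((-B_{t'})-y)$ by the evenness of $p_u$, two successive applications of Chapman--Kolmogorov (integrating first in $y$, then in $x$) collapse the double integral to $p_{t'-s'}(-B_{t'}-B_{s'})$, and the martingale property falls out as
\[
\mathbb{E}[M_{s,t} \mid \mathcal{F}_{s',t'}] = \frac{p_{t'-s'}(-B_{t'}-B_{s'})}{p_{t'-s'}(B_{t'}-B_{s'})} = \exp\!\left(-\frac{2B_{s'}B_{t'}}{t'-s'}\right) = M_{s',t'}.
\]

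I do not foresee any real obstacle: the whole argument is the reflection principle re-expressed as an identity on Gaussian densities, plus routine semigroup bookkeeping. The two points that demand care are (a) applying Proposition~\ref{timeinver}(ii) with the correct sign on $l$, and (b) tracking signs through the change of variable $y \mapsto -y$, since it is precisely this flip that produces the factor $-B_{t'}-B_{s'}$ in the final Gaussian and hence the non-trivial exponential $\exp(-2B_{s'}B_{t'}/(t'-s'))$.
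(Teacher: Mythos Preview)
Your argument is correct and rather elegant. The reflection identity $p_{t-s}(y-x)\exp(-2xy/(t-s))=p_{t-s}(x+y)$ combined with Chapman--Kolmogorov does exactly what you claim, and the reduction to $l=\nu=0$ via Proposition~\ref{timeinver}(ii) (with second parameter $-l$) is legitimate. Integrability is not an issue since $M_{s,t}\ge 0$ and the computation itself shows the integrals are finite.

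Your route differs from the paper's. The paper proceeds by first establishing a PDE characterisation of $PFH$ functions (Proposition~\ref{prop8}), itself proved via Lemma~\ref{martin} (reduction to one-parameter martingale properties in the filtrations $\mathcal{F}_s^{(t)}$ and $^{(s)}\!\mathcal{F}_t^{(+)}$) together with It\^o's formula and the enlarged-filtration semimartingale decomposition of $(B_s)_{s<t}$. Theorem~\ref{theo7} then follows by checking that $h^{(l,\nu)}$ satisfies the system $(-)$, $(+)$. Your approach bypasses all of this machinery: you compute the two-parameter conditional expectation directly from the bridge density, and the reflection trick explains \emph{why} the function has precisely this exponential form --- it is the density ratio of the ``reflected'' transition kernel against the true one. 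This is in fact the idea the paper later revisits in Exercise~6.9 (and in the first attempt of Section~6.5.1), but there it is phrased via Lemma~\ref{martin} as a one-parameter statement, whereas you carry it through in the full two-parameter setting. The paper's PDE approach has the advantage of being systematic --- it yields a criterion one can test on any candidate function and underlies the search for further $PFH$ families in Section~6.5 --- while yours is shorter, self-contained, and requires no stochastic calculus.
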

The sequel of this note consists in proving both theorems.

As an illustration of both theorems, and of Proposition \ref{timeinver},
an elementary computation shows that :
\begin{equation}
\widehat{h^{(l,\,\nu)}}= h^{(\nu,\,l)}; \widehat{f^{(l,\,\nu)}}= f^{(\nu,\,l)}
\end{equation}

\subsubsection{A connection with Section 4.3 (Part A)}
We now interpret our result (\ref{eq84}) in terms of the martingale : $(\mathcal{E}_u,u\leq t)$, which terminates at $\mathcal{E}_t>0$. Now, with the same notation as in Section 4.3 (Part A), the Azéma supermartingale : $(Z_s^K,s\leq t)$ is : 
\[Z_s^K=1-\Sigma_{s,t}^{(l,\,\nu)},\]
with $l=\log{(K)}$ and $\nu=-\frac{1}{2}$.\\
Hence : 
\[Z_s^K=\exp{\left(-\frac{2}{(t-s)}\left((B_s^{(-1/2)}-l)(B_t^{(-1/2)}-l)\right)^+\right)}.\]
We consider the easiest case : $K=1$, hence : $l=0$, so that :
\[\mathcal{G}_1(t)=\sup\{s\leq t,B_s-\frac{s}{2}=0\},\]
and we wish to compute :
\begin{equation}
\mathbb{E}\left[f\left(\exp{(B_t-\frac{t}{2})}\right)|\mathcal{G}_1(t)\right]=\frac{\mathbb{E}\left[f\left(\exp{(B_t)}\right)\exp{(-\frac{B_t}{2})}|g_t\right]}{\mathbb{E}\left[\exp{(-\frac{B_t}{2})}|g_t\right]},
\end{equation}
where : $g_t=\sup\{u\leq t, B_u=0\}$.\\
Now, we use the fact that : $B_t=B_{g_t+(t-g_t)}=\sqrt{t-g_t}\,\tilde{m}_1$ where $\tilde{m}_1=\epsilon\sqrt{2\mathbf{e}}$, and $g_t$, $\epsilon$, $\mathbf{e}$ are independent. $\epsilon$ is symmetric Bernoulli, and $\mathbf{e}$ is a standard exponential variable.\\
Since :
\[\mathbb{P}\left(\tilde{m}_1\in d\mu\right)=\frac{1}{2}|\mu|e^{-\mu^2/2}d\mu,\:(\mu\in\mathbb{R}),\]
we obtain with $\lambda=\sqrt{t-g_t}$ :
\begin{equation}
\mathbb{E}\left[f\left(\mathcal{E}_t\right)|\mathcal{G}_1(t)\right]=
\frac{\int_{-\infty}^\infty\frac{d\mu}{2}|\mu|e^{-\mu^2/2}e^{-\lambda\mu/2}f(e^{\lambda\mu})}
{\int_{-\infty}^\infty\frac{d\mu}{2}|\mu|e^{-\mu^2/2}e^{-\lambda\mu/2}},
\end{equation}
which provides us with an explicit form of the law $\nu_1(dm)$ defined in $Part A-(125)$. From there, all quantities found in Subsection $A-4.3$ can be made explicit.

\subsection{Proof of Theorem \ref{theo7}}
It will follow from the next proposition
\begin{proposition}\label{prop8}
A regular function $h(s,t;x,y)$ is $PFH$ if and only if it satisfies the following system :
\begin{equation}\label{syst-}\tag{$-$}
h'_s(s,t;x,y)+\frac{y-x}{t-s}h'_x(s,t;x,y)+\frac{1}{2}h''_{x^2}(s,t;x,y)=0
\end{equation}
\begin{equation}\label{syst+}\tag{$+$}
-h'_t(s,t;x,y)-\frac{y-x}{t-s}h'_y(s,t;x,y)+\frac{1}{2}h''_{y^2}(s,t;x,y)=0
\end{equation}
\end{proposition}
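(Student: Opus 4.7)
The plan is to characterize PFH functions through the Brownian bridge structure of $(B_u)_{s'\le u\le t'}$ conditional on $\mathcal{F}_{s',t'}$: by the Markov property applied at both endpoints, this is a standard Brownian bridge from $(s',B_{s'})$ to $(t',B_{t'})$.

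For the implication PFH $\Rightarrow$ (-) and (+), I would apply the martingale identity $\mathbb{E}[h(s,t;B_s,B_t)\mid\mathcal{F}_{s_0,t_0}]=h(s_0,t_0;B_{s_0},B_{t_0})$ in two infinitesimal configurations. Taking $t=t_0$ and $s=s_0+\varepsilon$, the conditional law of $B_{s_0+\varepsilon}$ is Gaussian with mean $B_{s_0}+\frac{\varepsilon}{t_0-s_0}(B_{t_0}-B_{s_0})$ and variance $\varepsilon+O(\varepsilon^2)$; Taylor-expanding $h(s_0+\varepsilon,t_0;\cdot,B_{t_0})$ around $(s_0,B_{s_0})$, taking expectations, and matching the $O(\varepsilon)$ coefficient to zero yields (-). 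The symmetric choice $s=s_0$, $t=t_0-\varepsilon$ yields (+).

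For the converse, I would split the two-parameter identity into a horizontal one, $\mathbb{E}[h(s,t;B_s,B_t)\mid\mathcal{F}_{s_0,t}]=h(s_0,t;B_{s_0},B_t)$, and a vertical one, $\mathbb{E}[h(s_0,t;B_{s_0},B_t)\mid\mathcal{F}_{s_0,t_0}]=h(s_0,t_0;B_{s_0},B_{t_0})$, using the tower property together with $\mathcal{F}_{s_0,t_0}\subset\mathcal{F}_{s_0,t}$ for $s_0\le t\le t_0$. The horizontal identity follows immediately from (-): conditional on $\mathcal{F}_{s_0,t}$, the bridge from $(s_0,B_{s_0})$ to $(t,B_t)$ has generator $L_u=\frac{B_t-x}{t-u}\partial_x+\tfrac12\partial_x^2$, and (-), with $t$ and $y=B_t$ frozen, reads exactly $\partial_u h+L_u h=0$; Itô's formula then shows $u\mapsto h(u,t;X_u,B_t)$ is a bridge martingale, and taking conditional expectations at $u=s$ and $u=s_0$ gives the identity.

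The main obstacle lies in the vertical identity, since (+) features the drift $-\tfrac{y-x}{t-s}$ rather than the bridge drift $\tfrac{B_{t_0}-y}{t_0-t}$, so a direct Itô argument does not close. One route is a direct computation: write $I(t):=\mathbb{E}[h(s_0,t;B_{s_0},B_t)\mid\mathcal{F}_{s_0,t_0}]$ as an integral against the Gaussian bridge density, differentiate in $t$, apply the Fokker--Planck equation for the bridge density together with integration by parts in $y$, and substitute (+); the residual $\mathbb{E}[\partial_y h\,(B_t-m(t))]$, with $m(t)$ the bridge mean, is converted via Gaussian integration by parts (Stein's identity) into $\mathrm{Var}(B_t)\,\mathbb{E}[\partial_y^2 h]$, exactly cancelling the remaining $\partial_y^2 h$ term and yielding $I'(t)=0$. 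A conceptually cleaner route is to note that a direct change-of-variables computation shows (-) for $\hat h$ is equivalent to (+) for $h$, so the vertical identity for $h$ reduces to the horizontal identity for $\hat h$ via the time-inversion invariance of Proposition~\ref{timeinver}(i).
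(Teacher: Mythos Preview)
Your proposal is correct and lands on the same two key ideas as the paper: reduce the two-parameter martingale property to two one-parameter ones (your horizontal/vertical split), and handle the $t$-variable via time inversion rather than by a direct bridge argument. The paper packages the first step as a separate lemma: $h(s,t;B_s,B_t)$ is a $(\mathcal{F}_{s,t})$-martingale iff, for each fixed $t$, it is an $(\mathcal{F}_s\vee\sigma(B_t))$-martingale in $s$, and symmetrically in $t$; the proof is exactly your tower-property argument. For $(-)$ the paper then uses the initial-enlargement decomposition $B_s=\beta_s^{(t)}+\int_0^s\frac{B_t-B_u}{t-u}\,du$ and a single application of It\^o's formula, which gives both implications at once and makes your separate Taylor-expansion step unnecessary. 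For $(+)$ the paper does not attempt your Fokker--Planck/Stein route at all; it simply invokes time inversion, which is your ``conceptually cleaner'' alternative. So your argument is sound, but the paper's version is shorter: one lemma, one It\^o computation, one time inversion.
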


\begin{enumerate}
\item[$\bullet$]The proof of this proposition hinges on the following lemma :
  \begin{lemma}\label{martin}
   Let \[M_{s,t}^f=f(s,t;B_s,B_t),\]
    for $f$ a regular function; it is a past-future
    martingale if and only if :
    \begin{enumerate}
    \item[a)] for fixed t, $\{(M_{s,t}^f)_s, s<t\}$ is a $(\mathcal{F}_{s}^{(t)}, s<t)$-martingale, where :
      \[\mathcal{F}_s^{(t)}=\mathcal{F}_s \vee \sigma(B_t),\]
    \item[b)] for fixed s, $\{(M_{s,t}^f)_t, t>s\}$ is a $(^{(s)}\!\mathcal{F}_t^{(+)},t>s)$-martingale where :
      \[^{(s)}\!\mathcal{F}_t^{(+)}=\sigma(B_s) \vee \mathcal{F}_t^{(+)},\quad
      \mathrm{and}\quad \mathcal{F}_t^{(+)}=\sigma(B_v,v\geq t)\]
    \end{enumerate}
\end{lemma}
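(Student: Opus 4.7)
The plan is to prove both implications separately: the ``only if'' direction is essentially the tower property, while the ``if'' direction requires a short Markov-property argument to upgrade the one-endpoint conditioning hypotheses of (a) and (b) to conditioning on the full past-future filtration.

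\emph{Forward direction.} Assume $(M^f_{s,t})$ is a past-future martingale. To verify (a), I would fix $t$, take $s'\le s\le t$, and apply the defining identity to the pair $(s',t)$: since $[s,t]\subset[s',t]$ the past-future property gives $\mathbb{E}[M^f_{s,t}\mid \mathcal{F}_{s',t}] = M^f_{s',t}$. Because $\mathcal{F}_{s'}^{(t)}=\mathcal{F}_{s'}\vee\sigma(B_t)$ is contained in $\mathcal{F}_{s',t}=\mathcal{F}_{s'}\vee\mathcal{F}_t^{(+)}$ and $M^f_{s',t}$ is clearly $\mathcal{F}_{s'}^{(t)}$-measurable, a single tower step yields (a). The proof of (b) is entirely symmetric, exchanging the roles of past and future.

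\emph{Backward direction.} Given (a) and (b), fix $s'\le s\le t\le t'$. I would interpolate through the intermediate pair $(s,t')$ in two steps: first move $t\leadsto t'$ keeping $s$ fixed, then $s\leadsto s'$ keeping $t'$ fixed. Step one should use (b) to obtain $\mathbb{E}[M^f_{s,t}\mid\mathcal{F}_{s,t'}]=M^f_{s,t'}$; step two should use (a) at horizon $t'$ to obtain $\mathbb{E}[M^f_{s,t'}\mid\mathcal{F}_{s',t'}]=M^f_{s',t'}$. Chaining by the tower property then gives the full past-future martingale identity $\mathbb{E}[M^f_{s,t}\mid\mathcal{F}_{s',t'}]=M^f_{s',t'}$.

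\emph{Where the work really sits.} Each of the two steps above involves an apparent mismatch of $\sigma$-algebras: hypothesis (b) a priori only gives conditional expectation against $\sigma(B_s)\vee\mathcal{F}_{t'}^{(+)}$, not against the larger $\mathcal{F}_{s,t'}=\mathcal{F}_s\vee\mathcal{F}_{t'}^{(+)}$, and similarly for (a). Bridging this gap is the main point, and I would do it via the Markov property of Brownian motion: $M^f_{s,t}=f(s,t;B_s,B_t)$ is $\sigma(B_u,u\ge s)$-measurable, and $\mathcal{F}_s$ is conditionally independent of $\sigma(B_u,u\ge s)$ given $B_s$, hence adjoining $\mathcal{F}_s$ to $\sigma(B_s)\vee\mathcal{F}_{t'}^{(+)}$ does not affect the conditional expectation of $M^f_{s,t}$. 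The symmetric reverse-Markov statement at time $t'$ (the future $\mathcal{F}_{t'}^{(+)}$ is conditionally independent of $(B_u,u\le t')$ given $B_{t'}$) upgrades (a) in the same way. This Markov enlargement is the only non-formal ingredient, and conceptually it is exactly what lets the genuinely two-parameter martingale condition decouple into the two one-parameter conditions (a) and (b).
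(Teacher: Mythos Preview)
Your proof is correct and follows essentially the same approach as the paper's: both directions hinge on the Markov property of Brownian motion to reconcile the single-endpoint filtrations $\mathcal{F}_s^{(t)}$ and ${}^{(s)}\!\mathcal{F}_t^{(+)}$ with the full past-future filtration $\mathcal{F}_{s,t}$. The only cosmetic differences are that the paper argues via test functions $\phi_{s'}\in b(\mathcal{F}_{s'})$, $\psi_{t'}\in b(\mathcal{F}_{t'}^{(+)})$ rather than directly with conditional expectations, and interpolates in the order $(s,t)\to(s',t)\to(s',t')$ rather than your $(s,t)\to(s,t')\to(s',t')$; neither difference is material.
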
  
  \begin{proof}
      Note that $(M_{s,t}^f)$ is an $(\mathcal{F}_{s,t})$-martingale if and only if : \\for every $\phi_{s'}\in b(\mathcal{F}_{s'})$, $\psi_{t'}\in b(\mathcal{F}_{t'}^{(+)})$, with $s'<s<t<t'$, one has :
      \begin{equation}\label{retoile}
        \mathbb{E}\left[\phi_{s'}M_{s,t}^f\psi_{t'}\right]=\mathbb{E}\left[\phi_{s'}M_{s',t'}^f\psi_{t'}\right].
      \end{equation}
      We now prove :
      \begin{enumerate}
      \item[1.] (\ref{retoile})$\Longrightarrow$ a) and b) :\\
        Take $s'< s < t=t'$, and $\psi_t=g(B_t)$ for a generic bounded function $g$; this proves (\ref{retoile}) $\Longrightarrow$ a), and by past-future symmetry, (\ref{retoile}) $\Longrightarrow$ b).
      \item[2.]  a) and b) $\Longrightarrow$ (\ref{retoile}) :\\
        We wish to prove (\ref{retoile}). From the Markov property, the $LHS$ of (\ref{retoile}), say $l$, is equal to : 
        \[l=\mathbb{E}\left[\phi_{s'}M_{s,t}^f\gamma(t,B_t)\right],\]
        where \[\gamma(t,B_t)=\mathbb{E}\left[\psi_{t'}|\mathcal{F}_t\right]=
        \mathbb{E}\left[\psi_{t'}|B_t\right].\]
        From a), we now get :
        \[l=\mathbb{E}\left[\phi_{s'}M_{s,t}^f\gamma(t,B_t)\right]=\mathbb{E}\left[\phi_{s'}M_{s',t}^f\psi_{t'}\right].\]
        Again, from the Markov property, we get :
        \[l=\mathbb{E}\left[\beta(s',B_{s'})M_{s',t}^f\psi_{t'}\right],\]
        where : 
        \[\beta(s',B_{s'})=\mathbb{E}\left[\phi_{s'}|B_{s'}\right].\] 
        Now, using b), we get :
        \[l=\mathbb{E}\left[\beta(s',B_{s'})M_{s',t}^f\psi_{t'}\right]=\mathbb{E}\left[\phi_{s'}M_{s',t}^f\psi_{t'}\right].\]
        The lemma is proven\footnote{We remark that these arguments only make use of the Markov property and not of the Brownian framework. A further discussion is provided in \cite{amel}.}.
      \end{enumerate}
    \end{proof}
  \item[$\bullet$]We now give a proof of Proposition \ref{prop8} :
\begin{proof}
    For fixed $t>0$, $(B_s, s\leq t)$ is a $(\mathcal{F}_s^{(t)}, s\leq t)$ semimartingale, with :
    \begin{equation}
      B_s=\beta_s^{(t)}+\int_0^s\,du\,\frac{B_t-B_u}{t-u},
    \end{equation}
    and $(\beta_s^{(t)},s\leq t)$ a $(\mathcal{F}_s^{(t)},s\leq t)$ Brownian motion
    (see \cite{jeulyor2}; also \cite{zurich}).

    We now apply Itô's formula to $(M_{s,t}^f,s\leq t)$ in the filtration $(\mathcal{F}_s^{(t)},s\leq t)$.
    We obtain for $u<s<t$ :
    \begin{equation}
      \begin{split}
        M_{s,t}=M_{u,t}&+\int_u^s\,dh\,f'_s(h,t;B_h,B_t)\\
        &+\int_u^s(d\beta_h^{(t)}+dh\,\frac{B_t-B_h}{t-h})\,f'_x(h,t;B_h,B_t)\\
        &+ \frac{1}{2}\int_u^s\,dh\,f''_{x^2}(h,t;B_h,B_t).
      \end{split}
    \end{equation}
    Hence, the martingale property holds if and only if :
    \begin{equation}
      f'_s(s,t;x,y)+\frac{y-x}{t-s}f'_x(s,t;x,y)+\frac{1}{2}f''_{x^2}(s,t;x,y)=0.
    \end{equation}
Thus, we have obtained $(-)$.
We might obtain $(+)$ using some similar arguments;
however a time inversion argument is much quicker and reduces the
obtention of $(+)$ to $(-)$. 
\end{proof}
\item[$\bullet$]Elementary computations now show that $h^{(l,\,\nu)}$ is $PFH$; that is we have proven Theorem \ref{theo7}.
\end{enumerate}

\begin{exo}\label{sol1}
Consider $h^{(l,\,\nu)}$ as a generating function, depending on the $2$ parameters $l,\nu$.
We write :
\begin{eqnarray*}
  h^{(l,\,\nu)}(s,t;x,y)&=&\exp{\left(-\frac{2}{t-s}(x+\nu s-l)(y+\nu t-l)\right)}\\
&=&\sum_{p=0}^{\infty}\sum_{q=0}^{\infty} l^p\nu^q\: H_{p,q}(s,t;x,y).
\end{eqnarray*} 
Prove that for any $p,q$, $H_{p,q}$ is $PFH$, and give an expression
for $H_{p,q}$. 
\end{exo}
\begin{exo}\label{sol2}
Introduce the 5 variables Hermite polynomials $\mathcal{H}_{p,q}(a,b,c,d,f)$
defined by the generating function :
\[
\exp{(al+b\nu-\frac{c\nu^2}{2}-\frac{dl^2}{2}+fl\nu)}=\sum_{p=0,\,q=0}^\infty l^p\nu^q\,\mathcal{H}_{p,\,q}(a,b,c,d,f)
.\]
Write $H_{p,\,q}$ in terms of $\mathcal{H}_{p,\,q}$.
\end{exo}
Solutions to Exercises \ref{sol1} and \ref{sol2} are given in the Appendix.
\begin{exo}
(A second family of $PFH$ functions). Check that, for any
$a\in\mathbb{R}$,
\begin{equation}
h^{(a)}(s,t;x,y)=\exp{\left(a\frac{y-x}{t-s}-\frac{a^2}{2}\frac{1}{t-s}\right)}
\end{equation}
is a $PFH$ function.\\
Hint: recall that, if 
\[B_{[s,t]}=\frac{B_t-B_s}{t-s},\]
then :
\[\mathbb{E}\left[B_{[s,t]}|\mathcal{F}_{s',t'}\right]=B_{[s',t']}.\]
This property of $(B_t)$ is the Harness property, which holds more generally
for any integrable Lévy process. More generally, see Mansuy-Yor (\cite{manyor2}) on Harnesses. We refer to  \cite{chaumont} (exercise $6.19$), \cite{jacod}, \cite{williams3}, \cite{williams4}.
\end{exo}
\begin{exo}
Give other examples of $PFH$ functions for other Lévy processes.\\
An excellent candidate is the Gamma process $(\gamma_t, t\geq 0)$, since the Harness property in this case :
\[\mathbb{E}\left[\gamma_{[s,t]}|\mathcal{F}_{s',t'}\right]=\gamma_{[s',t']},\]
is reinforced as follows :
\[\frac{\gamma_t-\gamma_s}{\gamma_{t'}-\gamma_{s'}}\:\mathrm{is\:independent\:from}\:\mathcal{F}_{s',t'},\]
and is distributed as :
\[\beta(t-s;(t'-s')-(t-s))\]
where $\beta(a;b)$ is the beta variable with parameters $a$ and $b$.
Thus, we have, for any $m>0$ :
\[\mathbb{E}\left[\left(\frac{\gamma_t-\gamma_s}{\gamma_{t'}-\gamma_{s'}}\right)^m|\mathcal{F}_{s',t'}\right]=\frac{\mathbb{E}\left[\left(\gamma_t-\gamma_s\right)^m\right]}{\mathbb{E}\left[\left(\gamma_{t'}-\gamma_{s'}\right)^m\right]},\]
ie : for any $m>0$ :
\[\frac{\left(\gamma_t-\gamma_s\right)^m}{\mathbb{E}\left[\left(\gamma_t-\gamma_s\right)^m\right]}=\frac{\left(\gamma_t-\gamma_s\right)^m}{\Gamma(m+(t-s))/\Gamma(t-s)}\:\mathrm{is\:a\:}(\mathcal{F}_{s,t})\:\mathrm{martingale}.\]
For more references about the nice properties of the Gamma process, see, e.g. \cite{yor3}.
\end{exo}
\begin{exo}
Give a version of Proposition \ref{prop8}, i.e : the differential
system ( $(-)$ and $(+)$ ), when Brownian motion now takes values in $\mathbb{R}^n$.\\
(Solution : $\frac{y-x}{t-s}h'_x$ is replaced by : $\frac{y-x}{t-s}\cdot\nabla_xh$,
and $h''_{x^2}$ is replaced by : $\Delta_xh$, and so on...)
\end{exo}

\subsection{Proof of Theorem \ref{theo6}}
We have :
\begin{equation}
Z_{s,t}^{(+)}=1_{\{B_s^{(\nu)}>l\}}\:\mathbb{P}\left(\inf_{\{s\leq u\leq t\}}\left(B_u^{(\nu)}-B_s^{(\nu)}\right)>l-B_s^{(\nu)}|B_t^{(\nu)}-B_s^{(\nu)}\right).
\end{equation}
Thus, we need to compute, for $l$, and $\lambda=l-B_s^{(\nu)}$ :
\begin{equation}\label{etoile6.4}
\begin{split}
&\mathbb{P}\left(\inf_{\{s\leq u\leq
    t\}}\left(B_u^{(\nu)}-B_s^{(\nu)}\right)>l-B_s^{(\nu)}|B_t^{(\nu)}-B_s^{(\nu)}=m\right)\\
&=\mathbb{P}\left(T_\lambda>(t-s)|B_{(t-s)}=m\right),
\end{split}
\end{equation}
since :
\renewcommand{\theenumi}{\roman{enumi}.}
\renewcommand{\labelenumi}{\theenumi}
\begin{enumerate}
\item the quantity in (\ref{etoile6.4}) clearly depends only on $(t-s)$ and
  not on the pair $(s,t)$,
\item the law of the Bridge from $a$ to $b$, over the time interval
  $[0,\alpha]$ of a Brownian motion with drift $\nu$ does not depend
  on $\nu$. 
\end{enumerate}
Thus, from (\ref{etoile6.4}), we need to compute :
\begin{equation}\label{etoilebis6.4}
\mathbb{P}\left(T_\lambda>(t-s)|B_{(t-s)}=m\right)=1-\mathbb{P}\left(T_\lambda<(t-s)|B_{(t-s)}=m\right).
\end{equation}
Let us denote by $\mathbb{P}_{0\to m}^{(u)}$ the law of the Brownian
bridge of length $u$, starting at $0$, ending at $m$. Then, there is
the result :
\begin{equation}\label{point6.4}
\mathbb{P}_{0\to m}^{(u)}\left(T_\lambda<u\right)=\exp{\left(-\frac{(2\lambda(\lambda-m))^{+}}{u}\right)}
\end{equation}
\underline{Comment :} We learnt from G.Pagès \cite{pages} that the Brownian Bridge method to estimate the error due to discretization for path dependent options relies upon formula (\ref{point6.4}), which is also proven in Section 7.7 of \cite{pages}.\bigskip

We give two proofs of (\ref{point6.4}) :
\begin{enumerate}
\item[$\bullet$]\textbf{A first proof via the reflection principle :}\\
\renewcommand{\theenumi}{\roman{enumi}.}
\renewcommand{\labelenumi}{\theenumi}
\begin{enumerate}
\item By scaling, we can restrict ourselves to $u=1$;
\item In \cite{yor2}, it is remarked, following \cite{sesh}, that if
  $S_t=\sup_{u\leq t}B_u$, then $2S_1(S_1-B_1)$ is independent from
  $B_1$, and satisfies :
\[2S_1(S_1-B_1)\overset{\underset{\mathrm{law}}{}}{=}\mathbf{e},\]
where $\mathbf{e}$ is a standard exponential variable.
Thus :
\begin{eqnarray*}
\mathbb{P}_{0\to m}^{(1)}\left(T_\lambda<1\right)&=&\mathbb{P}_{0\to
  m}^{(1)}\left(S_1>\lambda\right)\\
&=&\mathbb{P}_{0\to m}^{(1)}\left(2S_1(S_1-B_1)>2\lambda(\lambda-m)\right)\\
&=&\exp{\left(-2\lambda(\lambda-m)\right)}
\end{eqnarray*}
(here, we have assumed a priori : $\lambda(\lambda-m)>0$.)
\end{enumerate}
\item[$\bullet$]\textbf{A second proof via Doob's maximal identity :}
In order to show :
\begin{equation}\label{united}
p_{\lambda,\,m}\equiv\mathbb{P}\left[\sup_{u\leq
    1}B_u>\lambda|B_1=m\right]=\exp{\left(-2\lambda(\lambda-m)\right)},
\end{equation}
$(\lambda>m,\lambda>0)$, we use time inversion :
\begin{eqnarray*}
p_{\lambda,\,m}&=&\mathbb{P}\left[\sup_{t\geq 1}B_{1/t}>\lambda|B_1=m\right]\\
&=&\mathbb{P}\left[\sup_{t\geq 1}\frac{\hat{B}_t}{t}>\lambda|\hat{B}_1=m\right]\\
&=&\mathbb{P}\left[\exists t\geq 1,\left(\hat{B}_t-\lambda t\right)>0|\hat{B}_1=m\right]\\
&=&\mathbb{P}\left[\exists u\geq 0,\left(\hat{B}_{1+u}-\lambda (1+u)\right)>0|\hat{B}_1=m\right]\\
&=&\mathbb{P}\left[\exists u\geq 0,\left((\hat{B}_{1+u}-\hat{B}_1)-\lambda u\right)>(\lambda-m)\right]\\
&=&\mathbb{P}\left[\sup_{u\geq 0}(B_u-\lambda u)>(\lambda-m)\right]\\[0.1cm]
&=&\mathbb{P}\left[\sup_{u\geq 0}\exp{\left(2\lambda(B_u-\lambda u)\right)}>\exp{\left(2\lambda(\lambda-m)\right)}\right]\\[0.1cm]
&=&\mathbb{P}\left[\frac{1}{\mathbf{U}}>\exp{\left(2\lambda(\lambda-m)\right)}\right]\\
&=&\mathbb{P}\left[\mathbf{U}<\exp{\left(-2\lambda(\lambda-m)\right)}\right]\\
&=&\exp{\left(-2\lambda(\lambda-m)\right)}.
\end{eqnarray*}
We note that since we derive formula (\ref{united}) from Doob's
maximal identity, we can recover the joint law of $(S_1,B_1)$ from
this identity; traditionally, this joint law is obtained from the reflection principle.
\end{enumerate}

\begin{exo}
\begin{enumerate}
\item[a)] Prove (\ref{point6.4}) with the help of the absolute continuity
  relationship between $\mathbb{P}_{0\to m}^{(u)}$ and $\mathbb{P}_0$
  on $\mathcal{F}_t$, for any $t<u $.
\item[b)] Compute the law of $T_\lambda$ under $P_{0\to m}^{(u)}$.\\
The answer is :
\[P_{0\to m}^{(u)}\left(T_\lambda\in dt\right)=dt\frac{\lambda}{t}p_t(0,\lambda)\frac{p_{u-t}(\lambda,m)}{p_{u}(0,m)}.\]
\end{enumerate}
\end{exo}

\begin{exo}
Compute :
\[\mathbb{P}\left(\sup_{t\leq
    1}\left(\frac{B_t}{a+bt}\right)>\lambda|B_1=m\right).\]
The previous method yields :
\[
\mathbb{P}\left(\sup_{t\leq
    1}\left(\frac{B_t}{a+bt}\right)>\lambda|B_1=m\right)=
\exp{\left(-2\lambda a(\lambda (a+b)-m)\right)},
\]
for $\lambda>\frac{m}{a+b}$, $\lambda>0$.
\end{exo}

\subsection{Towards an integral representation of the positive $PFH$ functions}
So far, we have discovered two families of $PFH$ functions, i.e :
$h^{(l,\,\nu)}$ on one hand and $h^{(a)}$ on the other hand. It would
be nice to exhibit a unifying formula for all these $PFH$ functions.

In fact, we are interested in the following question :
is there an integral representation of all positive $PFH$ functions?
We were motivated to raise this question from the following classical
result, due to D. Widder \cite{widder}, see also J.L. Doob
\cite{doob}, concerning positive space-time harmonic functions (of
Brownian motion) :
$h\geq 0$ is space-time harmonic, if, by definition, $\{h(B_s,s),
s\geq 0\}$ is a Brownian martingale.
Thus, there exists a positive finite measure $\mu(d\lambda)$
such that :
\begin{equation}\label{6.5.160}
h(x,s)=\int_{-\infty}^\infty \,d\mu(\lambda)\exp{\left(\lambda x-\frac{\lambda^2s}{2}\right)}.
\end{equation}
\underline{Comment on (\ref{6.5.160}):}\\
In chapter I of \cite{zurich}, a probabilistic proof of Widder's result is obtained with the help of the non canonical Brownian motion :
\[B'_t=B_t-\int_0^t\frac{ds}{s}\,B_s,\quad t \geq 0.\]
It would be most interesting to obtain a representation of the $PFH$ functions by constructing a two-parameter process, with a past-future filtration $\mathcal{F}'_{s,t}$ such that :
\[\mathcal{F}_{s,t}=\mathcal{F}'_{s,t}\vee \sigma(B_s,B_t),\]
and $\mathcal{F}'_{s,t}$ is independent from the pair $(B_s,B_t)$.Such a
filtration is exhibited in the next exercise :
\begin{exo}
The past-future Bridge filtration.
\begin{enumerate}
\item Let us consider the following two-parameter filtration :
\[\mathcal{F}_{s,t}'=\sigma\{B_u-\frac{u}{s}B_s, u\leq s; B_{t+h}-B_t,h\geq
0\}.\]
Note that the $\sigma$-field $\mathcal{F}_{s,t}$ satisfies :
\[\mathcal{F}_{s,t}=\mathcal{F}_{s,t}'\vee\sigma(B_s,B_t),\]
and that $\mathcal{F}_{s,t}'$ is independent from $\sigma(B_s,B_t)$, in
fact from $\mathcal{F}_{[s,t]}=\sigma\{B_u, s\leq u\leq t\}$.
\item Prove that $\left(\mathcal{F}_{s,t}'\right)_{0\leq s<t<\infty}$ is
  the natural two-parameter filtration associated with :
\[X_{s,t}=\left(B_s-\int_0^s
  \frac{du}{u}\,B_u\right)+\left(\frac{1}{t}B_t-\int_t^\infty
  \frac{dv}{v^2}\,B_v\right),\]
that is : 
\[\mathcal{F}_{s,t}'=\Xi_{s,t}\equiv\sigma\{X_{u,v};u\leq s, v\geq t\}.\]
Hint : simply show that :
\[\mathbb{E}[X_{s,t}B_s]=\mathbb{E}[X_{s,t}B_t]=0,\]
and proceed from there.
\item Note that :
\[X_{s,t}=X_s^{(-)}+X_t^{(+)},\]
where : $X_s^{(-)}=\beta(B)_s$; $X_t^{(+)}=\beta(\hat{B})_{1/t}$, and :
$\beta(B)_s=B_s-\int_O^s\frac{du}{u}B_u$ is the Brownian motion introduced
in Chapter I of Yor, Zürich \cite{zurich}.
Furthermore : $\hat{B}_u=uB_{1/u}$. 
\end{enumerate}
\end{exo}
\begin{question}
It may also be of interest to consider the family of
interval-$\sigma$-field : $\mathcal{F}_{[s,t]}=\sigma\{B_u, s\leq u\leq t\}$,
and to look for the ``generation'' of $\mathcal{F}_{[s,t]}'$, a sub
$\sigma$-fields of $\mathcal{F}_{[s,t]}$ :
\[\mathcal{F}_{[s,t]}'=\sigma\{\frac{B_u-B_v}{u-v}-\frac{B_t-B_s}{t-s};s\leq
u\leq v\leq t\}.\]
Can one give a simpler description of $\mathcal{F}_{[s,t]}'$? Note that, if
$I_1, \dots, I_k$ are disjoint intervals then $\mathcal{F}_{I_1}', \dots,
\mathcal{F}_{I_k}'$ are independent
\end{question}

Independently from the previous exercise, we have obtained the following partial answers presented in the next two propositions.
\begin{proposition}\label{famille1}
Let $h(s,t;x,y)=\phi(\frac{1}{t-s};\frac{y-x}{t-s})$ for a function
$\phi(u,z)$.
Then, $h$ is $PFH$ if and only if $\phi$ is space-time harmonic, i.e :
\begin{equation}
\phi'_u+\frac{1}{2}\phi''_{z^2}=0.
\end{equation}
\end{proposition}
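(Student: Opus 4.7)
The plan is to apply Proposition~\ref{prop8} directly: substitute the ansatz $h(s,t;x,y)=\phi\!\left(\tfrac{1}{t-s},\tfrac{y-x}{t-s}\right)$ into each of the two PDEs $(-)$ and $(+)$, and verify that both reduce to a single equation in $\phi$, namely the space-time harmonic equation $\phi'_u+\tfrac12\phi''_{z^2}=0$.

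First I would set $u=\tfrac{1}{t-s}$ and $z=\tfrac{y-x}{t-s}$ and record the chain-rule ingredients:
\begin{equation*}
\partial_s u = u^2,\quad \partial_t u=-u^2,\quad \partial_s z = zu,\quad \partial_t z=-zu,\quad \partial_x z=-u,\quad \partial_y z=u,
\end{equation*}
while $\partial_x u=\partial_y u=0$. From these I compute, for the ``past'' equation $(-)$:
\begin{equation*}
h'_s=u^2\phi'_u+zu\,\phi'_z,\qquad h'_x=-u\,\phi'_z,\qquad h''_{x^2}=u^2\,\phi''_{z^2}.
\end{equation*}
Plugging into $(-)$ and using $\tfrac{y-x}{t-s}=z$, the two terms $zu\,\phi'_z$ and $z\cdot(-u\phi'_z)$ cancel, and I am left with
\begin{equation*}
u^2\Bigl(\phi'_u+\tfrac12\phi''_{z^2}\Bigr)=0.
\end{equation*}
Since $u>0$, this is equivalent to $\phi'_u+\tfrac12\phi''_{z^2}=0$.

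Next, by the symmetric computation for $(+)$, using
\begin{equation*}
h'_t=-u^2\phi'_u-zu\,\phi'_z,\qquad h'_y=u\,\phi'_z,\qquad h''_{y^2}=u^2\,\phi''_{z^2},
\end{equation*}
the cross terms again cancel and one gets the same single condition $\phi'_u+\tfrac12\phi''_{z^2}=0$. Proposition~\ref{prop8} then yields the equivalence. I do not expect any genuine obstacle here; the only delicate point worth flagging is that both PDEs collapse onto the \emph{same} scalar condition, which is precisely why an ansatz depending on only the two reduced variables $u,z$ is enough to describe a whole subfamily of $PFH$ functions (and in particular recovers $h^{(a)}$ by taking $\phi(u,z)=\exp(az-\tfrac12 a^2 u)$, a standard space-time harmonic exponential).
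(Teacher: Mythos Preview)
Your proof is correct and follows essentially the same route as the paper: substitute the ansatz into the system $(-),(+)$ of Proposition~\ref{prop8}, observe the cancellation of the $\phi'_z$ terms, and factor out $u^2=1/(t-s)^2$. The paper only writes out the computation for $(-)$ and concludes; you additionally carry out $(+)$ explicitly, which is a harmless (and reassuring) redundancy.
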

\begin{proof}
The $LHS$ of the first equation in $(-)$ writes, in this case :
(with : $u=1/(t-s)$; $z=(y-x)/(t-s)$)
\begin{equation}
\frac{1}{(t-s)^2}\phi'_u(u,z)-\frac{y-x}{(t-s)^2}\phi'_z(u,z)+\frac{y-x}{(t-s)^2}\phi'_z(u,z)+\frac{1}{2(t-s)^2}\phi''_{z^2}.
\end{equation}
Hence, the result.
\end{proof}
\begin{proposition}\label{famille2}
Let $h(s,t;x,y)=F(\frac{x}{\sqrt{t-s}},\frac{y}{\sqrt{t-s}})$ for a
function $F$. Then, $h$ is $PFH$ if and only if :
\begin{equation}\label{systou}
\begin{cases}
aF'_a+(2a-b)F'_b+F''_{b^2}=0\\
(2b-a)F'_a+bF'_b+F''_{a^2}=0
\end{cases}
\end{equation}
\end{proposition}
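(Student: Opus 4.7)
The plan is to apply Proposition \ref{prop8} directly to the ansatz $h(s,t;x,y)=F(a,b)$ with $a=x/\sqrt{t-s}$ and $b=y/\sqrt{t-s}$. Since $h$ depends on $(s,t,x,y)$ only through $(a,b)$, both PDEs $(-)$ and $(+)$ should, after chain-rule expansion and clearing of a common factor, reduce to purely algebraic/differential relations in $F(a,b)$. Writing $\tau=t-s$ for brevity, the elementary derivatives are
\[
\partial_s a=\frac{a}{2\tau},\quad \partial_t a=-\frac{a}{2\tau},\quad \partial_x a=\frac{1}{\sqrt{\tau}},\quad \partial_y a=0,
\]
and symmetric formulas for $b$, which already display the homogeneity that makes the collapse happen.

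Next I would compute all derivatives of $h$ that appear in Proposition \ref{prop8}:
\[
h'_s=\frac{aF_a+bF_b}{2\tau},\qquad h'_t=-\frac{aF_a+bF_b}{2\tau},
\]
\[
h'_x=\frac{F_a}{\sqrt{\tau}},\qquad h'_y=\frac{F_b}{\sqrt{\tau}},\qquad h''_{x^2}=\frac{F_{aa}}{\tau},\qquad h''_{y^2}=\frac{F_{bb}}{\tau},
\]
and observe that the transport coefficient becomes $(y-x)/(t-s)=(b-a)/\sqrt{\tau}$.

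Substituting these into equation $(-)$ and multiplying through by $2\tau$ yields
\[
aF_a+bF_b+2(b-a)F_a+F_{aa}=0,
\]
which simplifies to $(2b-a)F_a+bF_b+F_{aa}=0$, i.e.\ the second line of (\ref{systou}). Likewise, substituting into $(+)$ and multiplying by $2\tau$ gives $aF_a+bF_b-2(b-a)F_b+F_{bb}=0$, that is $aF_a+(2a-b)F_b+F_{bb}=0$, which is the first line of (\ref{systou}). Because the ansatz is stable under the symmetry exchanging past and future (swap $x\leftrightarrow y$, $s\leftrightarrow t$, equivalently $a\leftrightarrow b$), one of these computations can in fact be deduced from the other.

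The converse direction is automatic: if $F$ satisfies (\ref{systou}), then running the substitutions backwards produces exactly equations $(-)$ and $(+)$ for $h$, so Proposition \ref{prop8} concludes that $h$ is $PFH$. There is no genuine obstacle here; the only care required is the bookkeeping of signs in the $s$- and $t$-derivatives of $a$ and $b$, and the observation that the factors of $\sqrt{\tau}$ and $\tau$ align correctly so that a common $1/(2\tau)$ can be factored out of each equation.
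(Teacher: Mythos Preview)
Your proof is correct and follows exactly the same approach as the paper: compute the partial derivatives of $h$ via the chain rule and substitute into the system $(-)$, $(+)$ of Proposition~\ref{prop8}. In fact your write-up is slightly more complete than the paper's, which carries out only the $(-)$ computation to obtain the second line of~(\ref{systou}) and leaves the $(+)$ equation to the reader; you do both and also note the $a\leftrightarrow b$ symmetry and the converse direction explicitly.
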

\begin{proof}
\[h'_s=\frac{1}{2}\frac{x}{(t-s)^{3/2}}F'_a+\frac{1}{2}\frac{y}{(t-s)^{3/2}}F'_b;\]
\[h'_x=\frac{1}{\sqrt{t-s}}F'_a;\]
\[h''_{x^2}=\frac{1}{t-s}F''_{a^2}.\]
We must have :
\begin{eqnarray*}
&\frac{1}{2}\frac{1}{t-s}\left[\frac{x}{\sqrt{t-s}}F'_a+\frac{y}{\sqrt{t-s}}\right]
&+\frac{1}{t-s}\frac{y-x}{\sqrt{t-s}}F'_a+\frac{1}{2(t-s)}F''_{a^2}=0\\
\end{eqnarray*}
Hence :
\[\frac{1}{2}\left(aF'_a+bF'_b\right)+(b-a)F'_a+\frac{1}{2}F''_{a^2}=0.\]
We leave the end of the proof to the reader.
\end{proof}
Note that $F$, solution of (\ref{systou}), is harmonic for the
2-dimensional Ornstein-Uhlenbeck process $(X,Y)$ defined by :
\[X_t=\beta_t+\int_0^tY_s\,ds; Y_t=\gamma_t+\int_0^tX_s\,ds,\]
where $\beta$ and $\gamma$ is a 2-dimensional Brownian motion.

\textbf{A question :}\\
We dispose of the equations $(-)$ and $(+)$. We would
like to find a more general family of $PFH$ functions, than the ones
we obtained in Proposition \ref{famille1} and Proposition
\ref{famille2}.
More precisely, we consider the functions defined by :
\[h(s,t;x,y)=\exp{\left(a(s,t)xy+b(s,t)x+c(s,t)y+d(s,t)\right)},\]
associated to the four time dependent functions $a$, $b$, $c$, $d$.
\begin{enumerate}
\item[$\bullet$]$h$ satisfies formula $(-)$ if and only if : (after
computation of $h'_s$, $h'_x$, $h'_{x^2}$ and identification of the terms
in $xy$, $y^2$, $x$, $y$ and the constant parts)
\begin{equation}
\begin{cases}
(1)\quad \frac{\partial a}{\partial s}=\frac{1}{t-s}a(s,t)\\[0.1cm]
(2)\quad a(s,t)\left(\frac{1}{t-s}+\frac{1}{2}a(s,t)\right)=0\\[0.1cm]
(3)\quad \frac{\partial b}{\partial s}=\frac{1}{t-s}b(s,t)=0\\[0.1cm]
(4)\quad \frac{\partial c}{\partial s}+b(s,t)\left(\frac{1}{t-s}+a(s,t)\right)=0\\[0.1cm]
(5)\quad \frac{\partial d}{\partial s}+\frac{1}{2}b(s,t)^2=0
\end{cases}
\end{equation}

$(2)$ gives :
\[a(s,t)=0,\quad \mathrm{or}\quad a(s,t)=-\frac{2}{t-s}.\]
$(3)$ gives :
\[b(s,t)=\frac{\beta(t)}{t-s}.\]
Let us take $a(s,t)=-\frac{2}{t-s}$ (leaving the case $a(s,t)=0$ aside for the moment).
Then
$(4)$ gives :
\[c(s,t)=\frac{\beta(t)}{t-s}+\gamma(t).\]
$(5)$ gives :
\[d(s,t)=-\frac{1}{2}\frac{\beta(t)^2}{t-s}+\delta(t).\]

\item[$\bullet$]$h$ satisfies formula $(+)$ if and only if : (after
computation of $h'_t$, $h'_y$, $h'_{y^2}$ and identification of the terms
in $xy$, $y^2$, $x$, $y$ and the constant parts)
\begin{equation}
\begin{cases}
(1')\quad -\frac{\partial a}{\partial t}=\frac{1}{t-s}a(s,t)\\[0.1cm]
(2')\quad a(s,t)\left(\frac{1}{t-s}+\frac{1}{2}a(s,t)\right)=0\\[0.1cm]
(3')\quad -\frac{\partial b}{\partial t}+a(s,t)c(s,t)+\frac{c(s,t)}{t-s}=0\\[0.1cm]
(4')\quad \frac{\partial c}{\partial t}\left(\frac{1}{t-s}\right)+c(s,t)=0\\[0.1cm]
(5')\quad \frac{\partial d}{\partial t}+c(s,t)^2=0
\end{cases}
\end{equation}
Again $(2')$ gives :
\[a(s,t)=0,\quad \mathrm{or}\quad a(s,t)=-\frac{2}{t-s}.\]
$(4')$ gives :
\[c(s,t)=\hat{\gamma}(s)(t-s).\]
$(3')$ gives :
\[b(s,t)=-\hat{\gamma}(s)t+\hat{\beta}(s).\]
$(5')$ gives :
\[d(s,t)=\hat{\gamma}(s)^2\frac{(t-s)^3}{3}+\hat{\delta}(s).\]
\end{enumerate}
We now have to compare the different expressions we have obtained for
$a$, $b$, $c$, $d$ and determine 6 functions of one variable, i.e :
$\beta$, $\gamma$, $\delta$  on one hand and $\hat{\beta}$, $\hat{\gamma}$, $\hat{\delta}$ on the other hand.

\subsubsection{A first attempt to represent $PFH\geq0$ functions :}
Let $h(s,t;B_s,B_t)$ be an $\mathcal{F}_{s,t}$-martingale. Then, for
fixed $t$,
\[s\to h(s,t;B_s,B_t)\quad(s<t)\: \mathrm{is}\:\mathrm{an}\:
\mathcal{F}_s^{(t)} \mathrm{martingale}.\]
Hence : for fixed $(t,y)$, $h(s,t;x,y)$ is space-time harmonic for $\mathbb{P}_{0\to
  y}^{(t)}$, that is :
$h(s,t;B_s,y)p_{t-s}(B_s,y)$ is a martingale, (but only until
$t$) for Brownian motion, i.e (with obvious notations) :
\[H_{t,y}(s,x)=h(s,t;x,y)p_{t-s}(x,y) \mathrm{is\:space-time\:harmonic\:in} (s,x).\]
We shall note : $(s,x)$-space-time harmonic.
Hence,
\begin{equation}\label{-}
h(s,t;x,y)=\frac{1}{p_{t-s}(x,y)}H_{t,y}(s,x).
\end{equation}

Moreover, thanks\footnote{Here, our treatment is not totally rigorous since we assume that an $(s,x)$-space-time harmonic function, for $s<t$, may be extended to an $(s,x)$-space-time harmonic, for all $s>0$. Can one characterize such $t$-space-time harmonic functions? Our second attempt (see 6.5.2) does not suffer from this abuse.} to Proposition \ref{timeinver}, we have also :
\begin{equation}\label{+}
h(\frac{1}{t},\frac{1}{s};\frac{y}{t},\frac{x}{s})=\frac{1}{p_{t-s}(x,y)}K_{t,y}(s,x),
\end{equation}
where $K_{t,y}$ is a $(s,x)$-space-time harmonic function.
Let us rewrite (\ref{+}) with $u=\frac{1}{t}$, $v=\frac{1}{s}$,
$x'=\frac{y}{t}$, $y'=\frac{x}{s}$ :
\begin{equation}\label{+bis}
\begin{split}
h(u,v;x',y')&=\frac{1}{p_{\frac{1}{u}-\frac{1}{v}}(sy',tx')}K_{\frac{1}{u},tx'}(\frac{1}{v},sy')\\
&=\frac{1}{p_{\frac{1}{u}-\frac{1}{v}}(\frac{y'}{v},\frac{x'}{u})}K_{\frac{1}{u},\frac{x'}{u}}(\frac{1}{v},\frac{y'}{v}).
\end{split}
\end{equation}

Thus, $H$ and $K$ need to satisfy : 
\begin{equation}\label{A}
h(s,t;x,y)=\frac{1}{p_{t-s}(x,y)}H_{t,y}(s,x)=\frac{1}{p_{\frac{1}{s}-\frac{1}{t}}(\frac{y}{t},\frac{x}{s})}K_{\frac{1}{s},\frac{x}{s}}(\frac{1}{t},\frac{y}{t}).
\end{equation}

After some elementary computation, we have :
\begin{equation}
\frac{1}{p_{\frac{1}{s}-\frac{1}{t}}(\frac{y}{t},\frac{x}{s})}=\frac{\sqrt{st}}{\sqrt{2\pi(t-s)}}\exp{\left(-\frac{1}{2}\frac{(sy-xt)^2}{st(t-s)}\right)}.
\end{equation}
Hence, (\ref{A}) becomes :
\begin{equation}\label{fin}
\begin{split}
H_{t,y}(s,x)&=\frac{1}{\sqrt{st}}\exp{\left(\frac{(sy-xt)^2}{st(t-s)}-\frac{(y-x)^2}{t-s}\right)}K_{\frac{1}{s},\frac{x}{s}}(\frac{1}{t},\frac{y}{t})\\
&=\frac{1}{\sqrt{st}}\exp{\left(-\frac{1}{2}\left(\frac{y^2}{t}-\frac{x^2}{s}\right)\right)}K_{\frac{1}{s},\frac{x}{s}}(\frac{1}{t},\frac{y}{t}).
\end{split}
\end{equation}

If we assume that :
\[K_{\frac{1}{s},\frac{x}{s}}(\frac{1}{t},\frac{y}{t})=\int_{-\infty}^\infty\,d\eta\,\mathcal{K}_{\frac{1}{s},\frac{x}{s}}(\eta)\exp{\left(\eta\frac{y}{t}-\frac{\eta^2}{2t}\right)},\] 
i.e : the representing measure of $K_{a,z}(s,x)$is absolutely
continuous, it ``is likely'' that :
\[
\frac{1}{\sqrt{s}}\exp{\left(-\frac{x^2}{2s}\right)}\mathcal{K}_{\frac{1}{s},\frac{x}{s}}(\eta),
\]is $(s,x)$-space-time harmonic, i.e :
\begin{equation}
\frac{1}{\sqrt{s}}\exp{\left(-\frac{x^2}{2s}\right)}\mathcal{K}_{\frac{1}{s},\frac{x}{s}}(\eta)=\int\,\tau_{\eta}(d\lambda)\exp{\left(\lambda
  x-\frac{\lambda^2 s}{2}\right)}.
\end{equation}
Using again (\ref{A}), and taking $t=\frac{1}{u}$,
$\frac{y}{t}=uy=z$, we have :
\begin{equation}
K_{\frac{1}{s},\frac{x}{s}}(u,z)=\sqrt{\frac{u}{s}}\exp{\left(\frac{1}{2}\left(\frac{x^2}{s}+z^2t\right)\right)}H_{\frac{1}{u},\frac{z}{u}}(s,x)
\end{equation}
If we assume that :
\[H_{t,y}(s,x)=\int_{-\infty}^\infty\,d\lambda\,\mathcal{H}_{t,y}(\lambda)\exp{\left(\lambda
    x-\frac{\lambda^2s}{2}\right)},\] 
it ``is likely'' that :
\[
\frac{1}{\sqrt{u}}\exp{\left(\frac{z^2}{u}\right)}\mathcal{H}_{\frac{1}{u},\frac{z}{u}}(\lambda),
\]is $(u,z)$-space-time harmonic, i.e :
\begin{equation}
\frac{1}{\sqrt{u}}\exp{\left(\frac{z^2}{u}\right)}\mathcal{H}_{\frac{1}{u},\frac{z}{u}}(\lambda)
=\int\,\theta_{\lambda}(d\rho)\exp{\left(\rho
  z-\frac{\rho^2 u}{2}\right)}.
\end{equation}
It now remains to find a correspondence between $\tau_{\eta}$ and $\theta_{\lambda}$.......

\begin{exo}
Show from the above arguments that :
\[e_s^{(t)}\equiv \exp{\left(-\frac{2B_sB_t}{t-s}\right)}\:\mathrm{is\:a\:}(\mathcal{F}_{s,t}, s<t)\:\mathrm{martingale}.\]
More generally, if $(B_u)$  is a $n$-dimensional Brownian motion, show that :
\[\exp{\left(-\frac{2B_s\bullet B_t}{t-s}\right)}\:\mathrm{is\:a\:PF\:martingale}.\]
Solution : We need to show in particular that :
\[\exp{\left(-\frac{2B_sm}{t-s}\right)}p_{t-s}(B_s,m)\:\mathrm{is\:a\:}(\mathcal{F}_{s},s<t)\:\mathrm{martingale}.\]
In fact, this expression is : $p_{t-s}(B_s,-m)$, which is of course a $(\mathcal{F}_s, s<t)$ martingale!!
\end{exo}
\begin{exo}
( : T. Fujita, M. Yor \cite{fujita}). 
\renewcommand{\theenumi}{\roman{enumi}.}
\renewcommand{\labelenumi}{\theenumi}
\begin{enumerate}
\item Assume $(X_t)$ is a symmetric Lévy process, with
semigroup density : $p_t(x,y)\equiv q_t(|x-y|)$, for $q_t$ a function
on $\mathbb{R}^+$. Prove that, for any $a\in\mathbb{R}$, 
\[\frac{p_{t-s}(X_s+a,-(X_t+a))}{p_{t-s}(X_s,X_t)},\]
is a $\mathcal{F}_{s,t}$ martingale.
\item Application : for the Cauchy process $X_t=C_t$, one gets :
\[\frac{(t-s)^2+(C_t-C_s)^2}{(t-s)^2+(C_t+C_s+2a)^2}\]
is a $(\mathcal{F}_{s,t})$ martingale.
\end{enumerate}
Question : does there exist an adequate extension of this result to
non-symmetric Lévy processes?
\end{exo}
\begin{exo}
Define $R_u=|B_u|$, the n-dimensional Bessel process associated with $B$ a Brownian
motion in $\mathbb{R}^n$; compute the projection of $e_s^{(t)}$ on $\mathcal{R}_{s,t}$.
\end{exo}
\begin{exo}
Show that : $e_s^{(t)}\to 0$ when $s\to t$.\\
Identify the law of $\mathcal{G}_1^{(t)}=\sup\{s<t, e_s^{(t)}=1\}$, and more generally the law of $\mathcal{G}_K^{(t)}$
\end{exo}

\subsubsection{A second attempt to represent $PFH\geq0$ functions :}
Recall again that : $h(s,t;x,y)$ is $PFH$ if and only if :
\renewcommand{\theenumi}{\roman{enumi}.}
\renewcommand{\labelenumi}{\theenumi}
\begin{enumerate}
\item $\forall (t,y),\quad (s,x)\to h(s,t;x,y)$ is harmonic under $\mathbb{P}_{0\to y}^{(t)}$;
\item $h(s,t;x,y)\equiv k(\frac{1}{t},\frac{1}{s};\frac{y}{t},\frac{x}{s})$, where $k$ is $PFH$; and we shall apply (i) to $k$, for fixed $(s,x)$.
\end{enumerate}
Based on this, we can state the following :
\begin{proposition}\label{chéplus}
$h$ is $PFH\geq 0$ if and only if :
\begin{enumerate}
\item[a)]$\forall (t,y)$, there exists a space-time harmonic function for Brownian motion : $K_{(t,y)}^{(+)}$ such that :
\begin{equation}
h(s,t;x,y)=K_{(t,y)}^{(+)}\left(\frac{s}{t-s};\frac{xt-ys}{\sqrt{t}(t-s)}\right)
\end{equation}
\item[b)]$\forall (s,x)$, there exists a space-time harmonic function for Brownian motion : $K_{(\frac{1}{s},\frac{x}{s})}^{(-)}$ such that :
\begin{equation}
h(s,t;x,y)=K_{(\frac{1}{s},\frac{x}{s})}^{(-)}\left(\frac{s}{t-s};\frac{\sqrt{s}(y-x)}{t-s}\right)
\end{equation}
\end{enumerate}
\end{proposition}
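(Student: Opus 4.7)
The plan hinges on observations (i) and (ii) just above the statement, combined with the classical space-time change that turns a Brownian bridge into a standard Brownian motion. Together they let us reduce the PFH property to ordinary space-time harmonicity for Brownian motion.

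For the forward implication in (a), I would fix $(t,y)$ and use observation (i): the map $(s,x)\mapsto h(s,t;x,y)$ is space-time harmonic under the bridge measure $\mathbb{P}^{(t)}_{0\to y}$, so $s\mapsto h(s,t;X_s,y)$ is a positive martingale, where $X$ denotes the bridge from $0$ to $y$ on $[0,t]$. Writing $X_s=\tilde X_s+(s/t)y$ with $\tilde X$ the bridge from $0$ to $0$, I introduce
\begin{equation*}
u=\frac{s}{t-s}\in[0,\infty), \qquad \xi_u=\frac{1+u}{\sqrt t}\,\tilde X_{tu/(1+u)},
\end{equation*}
and check from the Gaussian covariance $\mathbb{E}[\xi_u\xi_v]=u\wedge v$ that $(\xi_u)_{u\geq 0}$ is a standard Brownian motion. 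A direct algebraic identity gives $\xi_u=(xt-ys)/(\sqrt t(t-s))$ at $x=X_s$, $s=tu/(1+u)$. Setting
\begin{equation*}
K^{(+)}_{(t,y)}(u,\xi):=h\!\left(\tfrac{tu}{1+u},\,t;\,\tfrac{\sqrt t\,\xi+uy}{1+u},\,y\right),
\end{equation*}
one obtains $K^{(+)}_{(t,y)}(u,\xi_u)=h(s,t;X_s,y)$, a positive martingale in $u$ for the filtration of $\xi$. Hence $K^{(+)}_{(t,y)}$ is a positive space-time harmonic function for standard Brownian motion, which is exactly (a).

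For (b), I would apply (a) to $\hat h$, which is again PFH and $\geq 0$ by Proposition \ref{timeinver}; unfolding the involution $(s,t;x,y)\leftrightarrow(1/t,1/s;y/t,x/s)$ turns the pair $\bigl(\tfrac{s}{t-s},\tfrac{xt-ys}{\sqrt t(t-s)}\bigr)$ into $\bigl(\tfrac{s}{t-s},\tfrac{\sqrt s(y-x)}{t-s}\bigr)$, so the $K^{(+)}$-representation of $\hat h$ yields the desired $K^{(-)}_{(1/s,x/s)}$. Conversely, given the form in (a), the chain rule together with the heat equation for $K^{(+)}$ makes $s\mapsto h(s,t;X_s,y)$ a martingale under $\mathbb{P}^{(t)}_{0\to y}$, i.e.\ equation $(-)$ of Proposition \ref{prop8} holds; (b) similarly yields $(+)$, so $h$ is PFH by Proposition \ref{prop8}, and positivity of $h$ is immediate from that of $K^{(\pm)}$.

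The main obstacle is precisely the point flagged in the footnote opening Subsection 6.5.1: one must be careful that the space-time harmonicity of $K^{(+)}_{(t,y)}$ really holds on the \emph{entire} half-plane $[0,\infty)\times\mathbb{R}$ and not merely on the range of the transformation $s\mapsto u=s/(t-s)$ for $s\in[0,t)$. Because the target $u$-range already equals $[0,\infty)$, this is automatic here and the "second attempt'' formulation of the proposition avoids the abuse present in the first attempt; the genuine remaining difficulty---the integral representation of positive PFH functions via some substitute for Widder's theorem (\ref{6.5.160})---is a separate question not addressed by the proof above.
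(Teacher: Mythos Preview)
Your argument is correct and follows the same overall architecture as the paper's: use observation (i) to reduce part (a) to expressing bridge space-time harmonic functions in terms of Brownian ones, and derive (b) from (a) via the involution of Proposition~\ref{timeinver}. The difference lies in how you carry out the bridge-to-Brownian step. The paper first scales to $t=1$, then performs \emph{two} time inversions separated by a unit time shift: writing $B'_v=vB_{1/v}$, then $\tilde B_u=B'_{1+u}-B'_1$, then $\beta_\theta$ via $\tilde B_u=u\beta_{1/u}$, it tracks the martingale property through each of these steps to arrive at~(\ref{b}) and hence~(\ref{c}). You instead invoke directly the classical deterministic transformation $\xi_u=\tfrac{1+u}{\sqrt t}\,\tilde X_{tu/(1+u)}$ from the bridge $\tilde X$ to a standard Brownian motion, verified by a one-line covariance computation, and then read off the change of variables $(s,x)\mapsto(u,\xi)$ algebraically. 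This is more economical and makes the origin of the specific arguments $\tfrac{s}{t-s}$ and $\tfrac{xt-ys}{\sqrt t(t-s)}$ transparent; the paper's double-inversion route, on the other hand, stays closer to the toolkit already deployed elsewhere in the note (time inversion as in Proposition~\ref{timeinver}) and so is more self-contained within the paper's narrative. Your treatment of the converse and of part (b) via $\hat h$ is also slightly more explicit than the paper's, which essentially leaves those directions implicit in the bijection~(\ref{c}).
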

To help the reader with these slightly complicated formulae, let us compute $K^{(+)}$ and $K^{(-)}$ for : 
\[h(s,t;x,y)=\exp{\left(-\frac{2xy}{t-s}\right)},\]in the following exercise :
\begin{exo}
\begin{enumerate}
\item[a)]Fix $t$ and $m$; find $K^{(+)}$ a space-time harmonic function such that :
\[\exp{\left(-\frac{2xm}{t-s}\right)}=K^{(+)}\left(\frac{s}{t-s};\frac{xt-ms}{\sqrt{t}(t-s)}\right).\] 
Answer :
\[K^{(+)}(u,\xi)=\exp{\left(-2m'\xi-2m'^2u\right)},\]
where $m'=\frac{m}{\sqrt{t}}$.
(Note the remarkable fact that $K^{(+)}$ depends only on $m'$).
\item[b)] Fix $s$ and $x$; find $K^{(-)}$ a space-time harmonic function such that :
\[\exp{\left(-\frac{2xy}{t-s}\right)}=K^{(-)}\left(\frac{s}{t-s};\frac{\sqrt{s}(y-x)}{t-s}\right).\] 
Answer :
\[K^{(-)}(u,\xi)=\exp{\left(-2x'(\xi+x'u)\right)},\]
where : $x'=x/\sqrt{s}$.\\
(Note the remarkable fact that $K^{(-)}$ depends only on $x'$).
\end{enumerate}
\end{exo}

\begin{proof} (of Proposition \ref{chéplus})\\
As explained just before stating that Proposition, the proof consists in expressing the generic $\geq 0$, space-time harmonic function $h_m^{(t)}(s,x)$ for the Brownian bridge $BB_{0\to m}^{(t)}$ in terms of a space-time harmonic function for Brownian motion.
The result is :
\begin{equation}\label{c}
h_m^{(t)}(s;x)=K\left(\frac{s}{t-s};\frac{xt-ms}{\sqrt{t}(t-s)}\right).
\end{equation}
we denote by $\mathcal{H}_m^{(t)}$ the set of the $h_m^{(t)}$ functions, space-time harmonic functions for the Brownian bridge $BB_{0\to m}^{(t)}$, and by $\mathcal{H}$ the set of the space-time harmonic functions for Brownian motion.
\begin{enumerate}
\item[1.] Clearly, by scaling,
\begin{equation}\label{a}
h_m^{(t)}(s;x)=h_{m/\sqrt{t}}^{(1)}\left(\frac{s}{t};\frac{x}{\sqrt{t}}\right).
\end{equation}
\item[2.] Let $h\in\mathcal{H}_m^{(1)}$; then, we shall show that there exists $K\in\mathcal{H}$ such that :
\begin{equation}\label{b}
h\left(\frac{1}{\tau+1};\frac{m+y}{\tau+1}\right)=K\left(\frac{1}{\tau};\frac{y}{\tau}\right).
\end{equation}
Then, we obtain (\ref{c}) from (\ref{a}) and (\ref{b}).
\item[3.] It now remains to prove (\ref{b}).\\
In order to state properties in Brownian terms, and not in terms of Brownian bridges, we use time inversion (in a first instance) :
\[B'_v=vB_{1/v}.\]
Then, the martingale property for $h$ writes : $(s<s'<1)$
\begin{eqnarray*}
\mathbb{E}\left[F\left(uB'_{1/u},\frac{1}{u}\geq\frac{1}{s'}\right)h\left(s;sB'_{1/s}\right)|B'_1=m\right]=\\[0.1cm]
\mathbb{E}\left[F\left(uB'_{1/u},\frac{1}{u}\geq\frac{1}{s'}\right)h\left(s';s'B'_{1/s'}\right)|B'_1=m\right].
\end{eqnarray*}
We use : $\sigma=1/s$; $\sigma'=1/s'$, then :
\begin{eqnarray*}
\mathbb{E}\left[F\left(B'_v, v\geq
    \sigma'\right)h\left(\frac{1}{\sigma};\frac{B'_{\sigma}}{\sigma}\right)|B'_1=m\right]=\\[0.1cm]
\mathbb{E}\left[F\left(B'_v, v\geq \sigma'\right)h\left(\frac{1}{\sigma'};\frac{B'_{\sigma'}}{\sigma'}\right)|B'_1=m\right].
\end{eqnarray*}
We now shift time by $1$ : $\sigma=\tau+1$; $\sigma'=\tau'+1$; and we introduce : $\tilde{B}_u=B'_{(u+1)}-B'_1$, a new Brownian motion. Then :
\begin{eqnarray*}
\mathbb{E}\left[F\left(\tilde{B}_u, u\geq
    \tau'\right)h\left(\frac{1}{\tau+1};\frac{m+\tilde{B}_{\tau}}{\tau+1}\right)\right]=\\[0.1cm]
\mathbb{E}\left[F\left(\tilde{B}_u, u\geq \tau'\right)h\left(\frac{1}{\tau'+1};\frac{m+\tilde{B}_{\tau'}}{\tau'+1}\right)\right].
\end{eqnarray*}
It is advantageous to introduce the notation :
\[H(\tau,x)=h\left(\frac{1}{\tau+1};\frac{m+x}{\tau+1}\right).\]
The previous equality writes :
\begin{eqnarray*}
\mathbb{E}\left[F\left(\tilde{B}_u, u\geq \tau'\right)H\left(\tau;\tilde{B}_{\tau}\right)\right]=\mathbb{E}\left[F\left(\tilde{B}_u, u\geq \tau'\right)H\left(\tau';\tilde{B}_{\tau'}\right)\right].
\end{eqnarray*}
Finally, we shall bring time ``back in order'' by introducing $\beta$ such that : $\tilde{B}_u=u\beta_{1/u}$, (time inversion : second instance), and : $\frac{1}{\tau'}=\theta'<\frac{1}{\tau}=\theta$. Hence :
\begin{equation}
\begin{split}
\mathbb{E}\left[F\left(\beta_{1/k}, \frac{1}{k}\leq
    \theta'\right)H\left(\frac{1}{\theta};\frac{1}{\theta}\beta_\theta\right)\right]&=\\
&\mathbb{E}\left[F\left(\beta_{1/k},
    \frac{1}{k}\leq
    \theta'\right)H\left(\frac{1}{\theta'};\frac{1}{\theta'}\beta_{\theta'}\right)\right].
\end{split}
\end{equation}
Consequently, $K(\theta;x)=H\left(\frac{1}{\theta};\frac{x}{\theta}\right)$ belongs to $\mathcal{H}$. We have obtained (\ref{b}).
\end{enumerate}
\end{proof}

\subsection{An explicit expression for the law of  $g_x^{(\nu)}(t)$}
In Note $2$, we discussed the Black-Scholes formula in relation with the law
of $\mathcal{G}_K$.
Similarly, here in our finite horizon framework, we would like to give an explicit
expression for the law of :\[g_x^{(\nu)}(t)=\sup\{s\leq t,
B_s^{(\nu)}=x\}.\]
Already, in the case $\nu=0$, the following result has been obtained
(see \cite{yor} and alsoo \cite{jeanblanc} Proposition 4.3.3.3) :
\begin{equation}\label{etoile6.6}
\mathbb{P}\left(g_x^{(0)}(t)\in
du\right)=\frac{du}{\pi\sqrt{u(t-u)}}\exp{\left(-\frac{x^2}{2u}\right)}\quad (0<u<t).
\end{equation}
Note that this is a sub-probability, since
$\frac{du}{\pi\sqrt{u(t-u)}}\quad (0<u<t)$ is the arcsin distribution;
indeed,
\begin{eqnarray*}
\mathbb{P}\left(g_x^{(0)}(t)=0\right)&=&\mathbb{P}\left(T_x\geq
  t\right)\\
&=&\mathbb{P}\left(|\mathbf{N}|\leq \frac{|x|}{\sqrt{t}}\right).
\end{eqnarray*}
We are now interested in the general case : $\nu\ne 0$; we shall
compute, for $0\leq s\leq t$ :
\[\gamma_{x,\,t}^{(\nu)}(s)\overset{\underset{\mathrm{def}}{}}{=}\mathbb{P}(0<g_x^{(\nu)}(t)\leq
s).\]
\begin{equation}
\gamma_{x,\,t}^{(\nu)}(s)=\mathbb{E}\left[1_{\{0<g_x^{(0)}(t)\leq s\}}\exp{\left(\nu B_t-\frac{\nu^2t}{2}\right)}\right].
\end{equation}
On $\{0<g_x^{(0)}(t)\leq s\}$, we have : $T_x \leq s$, and introducing
:
\[\tilde{B}_u=B_{T_x+u}-x,\]
a new Brownian motion, independent from $\mathcal{F}_{T_x}$, we have :
\[g_x^{(0)}(t)=T_x+\tilde{g}(t-T_x),\]
where $\tilde{g}(h)=\sup\{s<h, \tilde{B}_s=0\}$.
We have :
\begin{eqnarray*}
\gamma_{x,\,t}^{(\nu)}(s)&=&\mathbb{E}\left[1_{\{T_x\leq
    s\}}1_{\{\tilde{g}(t-T_x)\leq
    s-T_x\}}\exp{\left(\nu(x+\tilde{B}_{t-T_x})\frac{\nu^2t}{2}\right)}\right]\\
&=&\exp{\left(\nu
    x-\frac{\nu^2t}{2}\right)}\\
&\:&\mathbb{E}\left[1_{\{T_x+\tilde{g}(t-T_x)\leq
    s\}}\exp{\left(\nu\sqrt{(t-T_x)-\tilde{g}(t-T_x)}\tilde{m}_1\right)}\right], 
\end{eqnarray*}
where we have used the factorisation :
\[\tilde{B}_{t-T_x}=\sqrt{(t-T_x)-\tilde{g}(t-T_x)}\tilde{m}_1,\]
with $\tilde{m}_1=\epsilon m_1$ independent from
$(T_x,\tilde{g}(t-T_x))$, $\epsilon$ is Bernoulli,
$m_1\overset{\underset{\mathrm{law}}{}}{=}\sqrt{2\mathbf{e}}$.
We introduce the function :
\begin{equation}
\phi(\lambda)=\mathbb{E}[\exp{(\lambda\tilde{m}_1)}]
=\mathbb{E}[\cosh{(\lambda m_1)}]
=\int_0^\infty \,dt\,e^{-t}\cosh{(\lambda\sqrt{2t})}.
\end{equation}
An integration by parts shows that :
\begin{equation}
\phi(\lambda)=1+e^{\lambda^2/2}|\lambda|\sqrt{2\pi}\mathbb{P}\left(|\mathbf{N}|\leq
|\lambda|\right).
\end{equation}
See \cite{azyor3} for similar computations.
Consequently, we obtain :
\begin{equation}
\gamma_{x,\,t}^{(\nu)}(s)=\mathbb{P}\left({1_{\{0<g_x(t)\leq s\}}e^{\nu
    x-\frac{\nu^2t}{2}}}\phi(\nu\sqrt{t-g_x(t)})\right),
\end{equation}
which from (\ref{etoile6.6}), gives :
\begin{equation}
\mathbb{P}(g_x^{(\nu)}(t)\in du)=\frac{du}{\pi\sqrt{u(t-u)}}\exp{\left(-\frac{x^2}{2u}\right)}\phi(\nu\sqrt{t-u}).
\end{equation}
Very close computations may be found in Chapter 4, subsection 4.3.9 in \cite{jeanblanc}.
\subsection{On the time spent below a level by Brownian motion with
  drift}
Consider :
\[A_x^{(\nu)}(t)=\int_0^t\,ds\,1_{\{B_s^{(\nu)}\leq x\}},\quad x>0.\]
Thus, it is shown in \cite{yor}, that, for $\nu=0$ :
\begin{equation}
A_x^{(0)}(t)\overset{\underset{\mathrm{law}}{}}{=}g_x^{(0)}(t).
\end{equation}
\begin{question}
We may ask whether this identity in law also holds with $(B^{(\nu)})$
instead of $(B)$.
\end{question}
\begin{question}
Another question consists in studying :
\[\mathbb{P}\left(A_x^{(\nu)}(t)\leq s|\mathcal{F}_s\right).\]
(Maybe already studied by Dassios \cite{dass}, and Embrechts, Rogers
and Yor \cite{embr})
\end{question}

\subsection{Another class of past-future martingales}
(To appear in second edition of Chaumont-Yor, \cite{chaumont}, Chapter 7).
We are interested in constructing $(\mathcal{F}_{s,t})$ martingales
from the simple recipe :\\
if $F\in L^1(\mathbb{W},\mathcal{F}_\infty)$, then :
\[M_{s,t}(F)\overset{\underset{\mathrm{def}}{}}{=}\mathbb{E}\left[F|\mathcal{F}_{s,t}\right]\:\mathrm{is\:a}\:
(\mathcal{F}_{s,t})\mathrm{-martingale}.\]
For a number of functionals $F$, we are able to compute $M_{s,t}(F)$.
\begin{ex}
\begin{equation}\label{boom}
F=\exp{\left(\int_0^\infty\,f(u)dB_u-\frac{1}{2}\int_0^{\infty}\,f^2(u)du \right)},
\end{equation}
where $f\in L^2\left([0,\infty),du\right)$.
Then,
\begin{eqnarray*}
\mathbb{E}\left[F|\mathcal{F}_{s,t}\right]&=&\exp{\left(\int_0^s\,f(u)dB_u+\int_t^\infty\,f(u)dB_u-\frac{1}{2}\int_0^{\infty}\,f^2(u)du\right)}\\
&\times&\mathbb{E}\left[\exp{\left(\int_s^t\,f(u)dB_u\right)}|\mathcal{F}_{s,t}\right].
\end{eqnarray*}
We know that :
\begin{eqnarray*}
\mathbb{E}\left[\int_s^t\,f(u)dB_u|\mathcal{F}_{s,t}\right]&=&\frac{1}{t-s}\left(\int_s^t\,f(u)du\right)\left(B_t-B_s\right)\\
&=&\left(\int_s^t\,f(u)du\right)B_{[s,t]}.
\end{eqnarray*}
Hence :
\begin{eqnarray*}
\mathbb{E}\left[\exp{\left(\int_s^t\,f(u)dB_u\right)}|\mathcal{F}_{s,t}\right]&=&\exp{\left(\left(\int_s^t\,f(u)du\right)B_{[s,t]}\right)}\\[0.1cm]
&&\exp{\frac{1}{2}\left(\int_s^t\,f^2(du)du-\frac{1}{t-s}\left(\int_s^t\,f(u)du\right)^2\right)}.
\end{eqnarray*}
Finally, we have obtained :
\begin{equation}
\mathbb{E}\left[F|\mathcal{F}_{s,t}\right]=F_s^{(1)}F_{s,t}^{(2)}F_t^{(3)},
\end{equation}
where :
\begin{equation}
\begin{cases}
F_s^{(1)}=\exp{\left(\int_0^s\,f(u)dB_u-\frac{1}{2}\int_0^s\,f^2(u)du\right)}\\[0.15cm]
F_{s,t}^{(2)}=\exp{\left(\left(\int_s^t\,f(u)du\right)B_{[s,t]}-\frac{1}{2(t-s)}\left(\int_s^t\,f(u)du\right)^2\right)}\\[0.15cm]
F_t^{(3)}=\exp{\left(\int_t^\infty\,f(u)dB_u-\frac{1}{2}\int_t^\infty\,f^2(u)du\right)}.
\end{cases}
\end{equation}
\end{ex}
\textbf{Wishful thinking :} Recall that the computation of  $\mathbb{E}[F|\mathcal{F}_t]$, for $F$ in (\ref{boom}), and the fact that these functionals are total in $L^1(\mathbb{W},\mathcal{F}_{\infty})$ easily leads to the representation result of Brownian martingales as stochastic integrals with respect to Brownian motion. Could there be an analogous result for the $(\mathcal{F}_{s,t})$ filtration?
What might be the $PF$ martingale of reference, perhaps : $B_{[s,t]}$?
\begin{exo}
Compute :
\[\mathbb{E}\left[\int_0^\infty du\,e^{-\lambda u}\,f(B_u)|\mathcal{F}_{s,t}\right].\]
A first attempt :
\begin{eqnarray*}
\mathbb{E}\left[\int_0^\infty du\,e^{-\lambda u}\,f(B_u)|\mathcal{F}_{s,t}\right]&=&\int_0^sdu\,e^{-\lambda u}\,f(B_u)+\int_t^\infty du\,e^{-\lambda u}\,f(B_u)\\
&+&\int_s^t du\,e^{-\lambda u}\mathbb{E}\left[f(B_u)|\mathcal{F}_{s,t}\right].
\end{eqnarray*}
We observe that, for $s<u<t$ :
\[B_u=B_s+\left(\frac{u-s}{t-s}\right)(B_t-B_s)+R,\]
with $R$ independent of $(B_s, B_t-B_s)$ and $\mathbb{E}\left[R^2\right]\equiv r^2=\frac{(u-s)(t-u)}{(t-s)}$. Hence :
\begin{equation}\label{trucmuch}
\mathbb{E}\left[f(B_u)|\mathcal{F}_{s,t}\right]=\frac{1}{\sqrt{2\pi
    r^2}}\int_{-\infty}^\infty
dx\,e^{-\frac{x^2}{2r^2}}\,f(x+B_s+(u-s)B_{[s,t]}).
\end{equation}
We also note that we have obtained :
\[
 \forall y\in\mathbb{R},\quad \frac{1}{\sqrt{2\pi
     r^2}}e^{-\frac{\left(y-(B_s+(u-s)B_{[s,t]})\right)^2}{2r^2}} \]
 is a $(\mathcal{F}_{s,t})$ martingale for $s<u<t$, but we need to compute :
\[\int_s^t du\,e^{-\lambda u}\mathbb{E}\left[f(B_u)|\mathcal{F}_{s,t}\right]\]
with the help of (\ref{trucmuch})
\end{exo}

\newpage
\section{Note 7 : Which results still hold for discontinuous
  martingales?}
\subsection{No positive jumps}
In this section, $(M_t,t\geq 0)$ denotes a càdlàg
$\mathbb{R}^+$-valued local martingale, such that : $M_0=1$,
$\lim_{t\to\infty}M_t=0$, and $(M_t)$ has no positive jumps.

Then, our previous main results still hold; precisely :
\begin{proposition}
Under the previous hypotheses,
\begin{enumerate}
\item[a)]
\[\sup_{s\geq 0}M_s\overset{\underset{\mathrm{law}}{}}{=}\frac{1}{\mathbf{U}};\]
\item[b)] if $\mathcal{G}_K(M)=\sup\{s\geq 0, M_s\geq K\}$, with
  $K>0$, then, for any stopping time $T$ :
\begin{equation}
\mathbb{P}\left(\mathcal{G}_K(M)>T|\mathcal{F}_T\right)=\left(\frac{M_T}{K}\right)\wedge 1.
\end{equation}
\end{enumerate}
\end{proposition}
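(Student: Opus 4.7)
For part (a), I would carry out the classical Doob maximal identity argument, adapted to accommodate downward jumps. Fix $a > 1$ and consider the stopping time $T_a = \inf\{s \geq 0 : M_s \geq a\}$. The first observation is that the absence of positive jumps forces $M_{T_a} = a$ on $\{T_a < \infty\}$: since $M_s < a$ for every $s < T_a$, the left limit satisfies $M_{T_a-} \leq a$, and a strict inequality $M_{T_a-} < a$ together with $M_{T_a} \geq a$ would constitute a forbidden positive jump at $T_a$; hence $M_{T_a-} = a$, and then $\Delta M_{T_a} \leq 0$ combined with $M_{T_a} \geq a$ forces $M_{T_a} = a$. The stopped process $(M_{t \wedge T_a})_{t \geq 0}$ is therefore a positive martingale bounded by $a$, so optional stopping together with dominated convergence (using $M_\infty = 0$) yields
\[
1 = M_0 = \lim_{t\to\infty}\mathbb{E}\left[M_{t \wedge T_a}\right] = a\,\mathbb{P}(T_a < \infty),
\]
giving $\mathbb{P}\left(\sup_{s} M_s \geq a\right) = 1/a$ for every $a > 1$, which is exactly the tail of $1/\mathbf{U}$.

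For part (b), the plan is to run the same argument localized at the stopping time $T$. Introduce $\sigma = \inf\{s \geq T : M_s \geq K\}$. On the event $\{M_T < K\}$, the no-positive-jumps reasoning of part (a) applies verbatim and gives $M_\sigma = K$ on $\{\sigma < \infty\}$; the process $(M_{t \wedge \sigma})_{t \geq T}$ is then a positive martingale bounded by $K$. Conditioning on $\mathcal{F}_T$ and letting $t \to \infty$, dominated convergence with $M_\infty = 0$ produces
\[
M_T = K\,\mathbb{P}(\sigma < \infty \mid \mathcal{F}_T) \quad \mbox{on } \{M_T < K\}.
\]
On this event $\{\sigma < \infty\}$ coincides a.s.\ with $\{\mathcal{G}_K(M) > T\}$, so $\mathbb{P}(\mathcal{G}_K(M) > T \mid \mathcal{F}_T) = M_T/K = (M_T/K) \wedge 1$. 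On $\{M_T > K\}$, right-continuity of $M$ supplies a right neighborhood of $T$ on which $M_s > K$, whence $\mathcal{G}_K(M) > T$ a.s., again matching $(M_T/K) \wedge 1 = 1$.

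The main technical obstacle I anticipate is the boundary locus $\{M_T = K\}$, where the formula demands $\mathbb{P}(\mathcal{G}_K(M) > T \mid \mathcal{F}_T) = 1$: one must show that the shifted process $(M_{T+u})_{u\geq 0}$, a positive local martingale starting at $K$ with limit $0$ and no positive jumps, almost surely revisits or exceeds $K$ on $(0,\infty)$. This should follow from the fact that no such process can stay strictly below $K$ on $(0,\infty)$ while maintaining $\mathbb{E}[M_{T+u} \mid \mathcal{F}_T] = K$; a monotone positive martingale tending to $0$ and starting from $K>0$ simply does not exist. The argument can be made rigorous via a zero-one law applied to the germ $\sigma$-field at $T$, or by iterating the optional stopping identity on smaller levels $K-\epsilon$ and passing to the limit. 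Aside from this subtlety, the entire proof reduces to a single optional stopping plus dominated convergence pass.
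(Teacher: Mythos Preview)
The paper does not actually supply a proof of this proposition; it merely asserts that ``our previous main results still hold'' under the no-positive-jumps hypothesis and moves on after a remark about Az\'ema--Yor martingales. Your approach via Doob's maximal identity is the standard one, and the adaptation to the no-positive-jumps setting is correct: the key point, which you isolate clearly, is that $M_{T_a}=a$ on $\{T_a<\infty\}$, so that the stopped process is bounded and optional stopping applies without qualification. Parts (a) and (b) on $\{M_T<K\}$ and $\{M_T>K\}$ are fine as written.

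For the boundary set $\{M_T=K\}$, your sketch can be tightened, and in fact part (a) does the work for you. Conditionally on $\mathcal{F}_T$, the shifted process $N_u=M_{T+u}/K$ is, on $\{M_T=K\}$, a positive c\`adl\`ag local martingale with $N_0=1$, $N_\infty=0$, and no positive jumps; hence by (a) its overall supremum is distributed as $1/\mathbf{U}$ and in particular exceeds $1$ almost surely. Since $N_0=1$, this forces $\sup_{u>0}N_u>1$ a.s., and c\`adl\`ag regularity then produces an actual time $u>0$ with $N_u>1$, i.e.\ $M_{T+u}>K$, so $\mathcal{G}_K(M)>T$ a.s.\ on $\{M_T=K\}$. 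This bypasses the germ $\sigma$-field argument you were contemplating.
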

\underline{\textbf{Remark :}} In the same vein, we mention that the so-called ``Azéma-Yor martingales'' $F(S_t,M_t)$, with $S_t=\sup_{s\leq t} M_s$ admit an extension (with no formal change in the formula) to the family of martingales with no positive jumps. See L. Nguyen- M. Yor \cite{nguyen}.
\subsection{A formula in the Lévy process framework}
We would like to apply the preceeding to $M_t=\exp{\left(X_t-t\psi(1)\right)}$,
$t\geq 0$, where $(X_t, t\geq 0)$ is a Lévy process with no Brownian component.

A general version of Tanaka's formula ( see \cite{yor5}) is :
\begin{equation}
\left(M_t-K\right)^+=\left(M_0-K\right)^++\int_0^t1_{\{M_{s^-}>K\}}\,dM_s+\Sigma_t^{(K)},
\end{equation}
where :
\begin{equation}
\Sigma_t^{(K)}=\sum_{s\leq t}\left(1_{\{M_{s^-}>K\}}\left(M_s-K\right)^-+1_{\{M_{s^-}\leq K\}}\left(M_s-K\right)^+\right).
\end{equation}
Let $\nu(dx)$ denote the Lévy measure of $X$. Then, Lévy's
compensation formula gives :
\begin{equation}
\begin{split}
\mathbb{E}\left[\Sigma_t^{(K)}\right]&=\mathbb{E}\left[\int_0^t1_{\{M_{s^-}>K\}}\,ds\int\nu(dx)\left(M_{s^-}e^x-K\right)^-\right]\\
&+\mathbb{E}\left[\int_0^t1_{\{M_{s^-}\leq K\}}\,ds\int\nu(dx)\left(M_{s^-}e^x-K\right)^+\right]\\
&=\int_0^tds\,\phi(s,K), 
\end{split}
\end{equation}
with :
\begin{equation}
\begin{split}
\phi(s,K)&=\int_0^1 du\,\mu(]0,u[)\mathbb{E}\left[M_s\,1_{\{K<M_s<\frac{K}{u}\}}\right] \\
&+ \int_1^\infty du\,\mu([u,\infty[)\mathbb{E}\left[M_s\,1_{\{\frac{K}{u}<M_s<K\}}\right],
\end{split}
\end{equation}
where $\mu$ is the image of $\nu$ by the exponential application :
$x\to\exp{(x)}$.

Finally, we have obtained :
\[\mathbb{E}\left[\left(M_t-K\right)^+\right]=\left(M_0-K\right)^++\int_0^tds\,\phi(s,K),
\]
which plays the same role as Tanaka's formula in the continuous framework.

\newpage

\section{Note 8 : Other option prices which increase with maturity}
The contents of this note are taken entirely from a preprint by
Carr-Ewald-Xiao \cite{ewald}, to whom we are grateful for free access.

The main result of that paper is the following :
\begin{theorem}
Let $g$ be a continuous convex function; then, the function :
\[\mathcal{A}_g^{(t)}=\mathbb{E}\left[g\left(\frac{1}{t}\int_0^tds\,\mathcal{E}_s\right)\right]\]
is an increasing function of t.
\end{theorem}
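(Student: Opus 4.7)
The plan is to differentiate $\mathcal{A}_g^{(t)}$ in $t$ and show the derivative is non-negative. Writing $A_t=\int_0^t\mathcal{E}_s\,ds$, the identity $\frac{d}{dt}(A_t/t)=(\mathcal{E}_t-A_t/t)/t$ gives, for smooth convex $g$,
\[
\frac{d}{dt}\mathcal{A}_g^{(t)}=\frac{1}{t}\,\mathbb{E}\bigl[g'(A_t/t)(\mathcal{E}_t-A_t/t)\bigr].
\]
Decomposing $\mathcal{E}_t-A_t/t=\frac{1}{t}\int_0^t(\mathcal{E}_t-\mathcal{E}_s)\,ds$ and applying Fubini reduces the task to proving
\[
\mathbb{E}\bigl[g'(A_t/t)(\mathcal{E}_t-\mathcal{E}_s)\bigr]\ge 0\quad\text{for every }0\le s\le t.
\]

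The main tool will be a family of absolutely continuous changes of probability. For each $s\in[0,t]$, introduce the measure $Q_s$ on $\mathcal{F}_t$ with density $\mathcal{E}_s$ (that is, defined by the stopped martingale $(\mathcal{E}_{r\wedge s})_r$). By Girsanov, $\tilde{B}_r:=B_r-(r\wedge s)$ is a $Q_s$-Brownian motion, so under $Q_s$ the random variable $A_t$ has the same law as
\[
A_t^{(s)}:=\int_0^s e^{B_r+r/2}\,dr+e^{s}\!\int_s^t e^{B_r-r/2}\,dr,
\]
computed with $B$ a $P$-Brownian motion. Consequently $\mathbb{E}[g'(A_t/t)\mathcal{E}_s]=\mathbb{E}[g'(A_t^{(s)}/t)]$, and the inequality above becomes the assertion that $\mathbb{E}[g'(A_t^{(t)}/t)]\ge \mathbb{E}[g'(A_t^{(s)}/t)]$ for every $s\in[0,t]$.

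The crux of the argument is the pathwise monotonicity of $s\mapsto A_t^{(s)}$. Differentiating in $s$ with the Brownian path frozen, the two boundary contributions cancel exactly:
\[
\frac{d}{ds}A_t^{(s)}=e^{B_s+s/2}+e^{s}\!\int_s^t e^{B_r-r/2}\,dr-e^{s}e^{B_s-s/2}=e^{s}\!\int_s^t e^{B_r-r/2}\,dr\ \ge\ 0.
\]
Hence $A_t^{(s)}$, and therefore $g'(A_t^{(s)}/t)$ (since $g'$ is non-decreasing by convexity of $g$), is non-decreasing in $s$ along every Brownian path; taking expectations preserves the monotonicity. Integrating yields
\[
\frac{d}{dt}\mathcal{A}_g^{(t)}=\frac{1}{t^2}\int_0^t\!\bigl(\mathbb{E}[g'(A_t^{(t)}/t)]-\mathbb{E}[g'(A_t^{(s)}/t)]\bigr)\,ds\ \ge\ 0.
\]

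The only genuine obstacle is technical: since $g$ is merely continuous and convex, $g'$ exists only almost everywhere and the differentiation under the expectation is not automatic. The standard remedy is to run the argument first for smooth convex $g$ with bounded derivative, where dominated convergence applies directly, and then approximate a general continuous convex $g$ by mollifications $g_{\varepsilon}=g*\rho_{\varepsilon}$, passing to the limit $\varepsilon\downarrow 0$ by invoking the absolute continuity of the law of $A_t/t$.
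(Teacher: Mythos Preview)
The paper does not actually prove this theorem: Note 8 simply states the result and attributes it entirely to the preprint of Carr--Ewald--Xiao \cite{ewald}, before moving on to an exercise about the moments $a_n(t)$. So there is no proof in the paper to compare against.

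Your argument is correct and self-contained. The Girsanov step is the right idea: with $dQ_s/dP|_{\mathcal{F}_t}=\mathcal{E}_{s}$, the process $\tilde B_r=B_r-(r\wedge s)$ is a $Q_s$-Brownian motion on $[0,t]$, so the $Q_s$-law of $A_t$ is exactly the $P$-law of your $A_t^{(s)}$. The pathwise computation
\[
\frac{d}{ds}A_t^{(s)}=e^{s}\!\int_s^t e^{B_r-r/2}\,dr\ge 0
\]
is clean, and together with the monotonicity of $g'$ it yields the desired inequality $\mathbb{E}[g'(A_t^{(t)}/t)]\ge \mathbb{E}[g'(A_t^{(s)}/t)]$ for every $s\le t$. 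This is in fact the essence of the Carr--Ewald--Xiao argument: a measure change that turns the comparison into a pathwise monotonicity statement.

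Two small comments on the write-up. First, you do not need the absolute continuity of the law of $A_t/t$ for the approximation step: since $g_\varepsilon\to g$ locally uniformly and each $g_\varepsilon$ is convex, monotone convergence (after subtracting an affine minorant of $g$) already passes the inequality $\mathcal{A}_{g_\varepsilon}^{(t_1)}\le\mathcal{A}_{g_\varepsilon}^{(t_2)}$ to the limit. Second, rather than differentiating $\mathcal{A}_g^{(t)}$ in $t$ and then integrating back, you can bypass the interchange of derivative and expectation altogether by fixing $t_1<t_2$ and comparing $\mathcal{A}_g^{(t_1)}$ with $\mathcal{A}_g^{(t_2)}$ directly via the same family $(Q_s)$; this avoids the smoothness detour entirely.
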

\begin{exo}
Compute, for any $n\geq 1$, the function :
\[a_n(t)=\mathbb{E}\left[\left(\frac{1}{t}\int_0^tds\,\mathcal{E}_s\right)^n\right]\equiv \frac{\tilde{a}_n(t)}{t^n}.\]
\underline{Partial solution and hints :}
\begin{itemize}
\item $a_1(t)=1$;
\item $a_2(t)=\frac{2}{t^2}(e^t-1-t)$;
\item In order to compute $a_n(t)$, it may be useful to first prove the following formula : for $\alpha$ sufficiently large,
\[\int_0^\infty dt\,e^{-\alpha t}\tilde{a}_n(t)=\frac{n!}{\alpha^2(\alpha-1)(\alpha-3)\dots(\alpha-\frac{n(n-1)}{2})},\]
where 
$$\tilde{a}_n(t)=\mathbb{E}\left[\left(\int_0^t ds\,\mathcal{E}_s\right)^n\right]\equiv t^n a_n(t);$$
\item More directly, we obtain :
\end{itemize}
\begin{eqnarray*}
a_n(t)&=&\frac{n!}{t^n}\:\mathbb{E}\Big[\int_0^t ds_1\int_{s_1}^t ds_2\dots\int_{s_{n-1}}^t ds_n \\
& &\exp{\left((B_{s_1}+\dots+B_{s_n})-\frac{1}{2}(s_1+\dots+s_n)\right)}\Big]\\
&=&\frac{n!}{t^n}\int_0^t ds_1\int_{s_1}^t ds_2\dots\int_{s_{n-1}}^t ds_n\,\exp{\left(\frac{1}{2}C(s_1,\dots,s_n)\right)}
\end{eqnarray*}
where :
\[
C(s_1,\dots,s_n)=
\mathbb{E}\left[(B_{s_1}+\dots+B_{s_n})^2\right]-(s_1+\dots+s_n).\]
and we find :
\[\mathbb{E}\left[(B_{s_1}+\dots+B_{s_n})^2\right]=n^2s_1+(n-1)^2(s_2-s_1)+\dots+(s_n-s_{n-1}).\]
Hence :
\begin{eqnarray*}
C(s_1,\dots,s_n)&\equiv&\sum_{j=0}^{n-1}\left((n-j)^2-(n-j)\right)(s_{j+1}-s_j)\\
&\equiv&\sum_{j=0}^{n-1}\left((n-j)(n-(j+1))\right)(s_{j+1}-s_j)
\end{eqnarray*}
\begin{itemize}
\item Finally, it may be useful in order to solve this question to use the ``Asian Option identity in law'' :
\[A^{(\nu)}_{T_\lambda}=\int_0^{T_\lambda}ds\,\exp{2(B_s+\nu s)}\overset{\underset{\mathrm{law}}{}}{=}\frac{\beta_{1,a}}{2\gamma_b},\]
where $T_\lambda$ is an independent exponential time with parameter $\lambda$, and $\beta_{1,a}$ and $\gamma_b$ are 2 independent random variables, respectively distributed as beta$(1,a)$ and gamma$(b)$, with $a=\frac{\mu+\nu}{2}$, $b=\frac{\mu-\nu}{2}$, and $\mu=\sqrt{2\lambda+\nu^2}$. For a compendium of results/papers on this topic, see Yor \cite{yorbis}.
\end{itemize}
\end{exo}

\newpage
\section{Note 9 : From puts to calls : more care is needed!}
\subsection{Why should one be careful?}
In pricing financial options , the $LHS$ of Part A-$(12)$ arises very naturally in terms of put options, i.e. when considering :
\begin{equation}\label{9.1}
\mathbb{E}\left[\left(K-M_t\right)^+\right]
\end{equation}
On the other hand, the price of a call option is :
\begin{equation}\label{9.2}
\mathbb{E}\left[\left(M_t-K\right)^+\right]
\end{equation}
A most common argument to ``reduce'' (\ref{9.2}) to (\ref{9.1}) is to invoke ``call-put parity'' and/or ``change of numéraire''. Mathematically, this means that we consider the new probability $\mathbb{Q}$ defined via :
\begin{equation}\label{gir}
\mathbb{Q}\vert_{\mathcal{F}_t}=M_t\cdot\mathbb{P}\vert_{\mathcal{F}_t},
\end{equation}
and the martingale $(\frac{1}{M_t}, t\geq 0)$ under $\mathbb{Q}$, since :
\begin{equation}\label{9.24}
\begin{split}
\mathbb{E}_{\mathbb{P}}\left[\left(M_t-K\right)^+\right]&=\mathbb{E}_{\mathbb{Q}}\left[\left(1-\frac{K}{M_t}\right)^+\right]\\
&=K\,\mathbb{E}_{\mathbb{Q}}\left[\left(\frac{1}{K}-\frac{1}{M_t}\right)^+\right].
\end{split}
\end{equation}
However, two difficulties arise in order to perform these operations \\rigorously :
\renewcommand{\theenumi}{\roman{enumi}.}
\renewcommand{\labelenumi}{\theenumi}
\begin{enumerate}
\item in order that $\mathbb{Q}$, as defined via (\ref{gir}), be a probability, we need that $(M_t,t\geq 0)$ is a true martingale under $\mathbb{P}$, i.e : it satisfies in particular $\mathbb{E}_{\mathbb{P}}(M_t)=1$;
\item some care is needed also concerning (\ref{9.24}); in particular $M_t$ might take the value $0$ on some $\mathcal{F}_t$-set of positive $\mathbb{P}$-probability.
\end{enumerate}
To summarize, (\ref{gir}) and (\ref{9.24}) are correct\\
\underline{if $(M_t, t\geq 0)$ is a strictly positive true martingale under $\mathbb{P}$}.

Formally, this may be stated as :
\begin{proposition}\label{prop9.4}
If $(M_t)$ is a strictly positive true continuous martingale under $\mathbb{P}$, define $\mathbb{P}^M$ via :
\[\mathbb{P}^M\vert_{\mathcal{F}_t}=M_t\cdot\mathbb{P}\vert_{\mathcal{F}_t}.\]
Denote :
\[g_\infty^{(1)}=\sup\{t\geq 0, M_t=1\}.\]
Then :
\renewcommand{\theenumi}{\roman{enumi}.}
\renewcommand{\labelenumi}{\theenumi}
\begin{enumerate}
\item $\mathbb{E}_{\mathbb{P}}\left[F_t\left(M_t-1\right)^+\right]=\mathbb{E}^M\left[F_t\,1_{\{g_\infty^{(1)}\leq t\}}\right]$, for every $F_t\in\mathcal{F}_t$;
\item $\mathbb{E}_{\mathbb{P}}\left[F_t|M_t-1|\right]=\mathbb{E}_{\mathbb{P}}\left[F_t\,1_{\{g_\infty^{(1)}\leq t\}}\right]+\mathbb{E}^M\left[F_t\,1_{\{g_\infty^{(1)}\leq t\}}\right]$;
\item $g_\infty^{(1)}$ has the same distribution under $\mathbb{P}$ and under $\mathbb{P}^M$.
\end{enumerate}
\end{proposition}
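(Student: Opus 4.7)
The plan is to reduce part (i) to the Part A call-vs-last-passage-time formula via the change of measure, and then to extract (ii) and (iii) as corollaries. The starting pointwise identity is
\[
(M_t-1)^+ = M_t\,\bigl(1 - 1/M_t\bigr)^+,
\]
which, setting $N_t = 1/M_t$ and using $\mathbb{P}^M|_{\mathcal{F}_t} = M_t\cdot\mathbb{P}|_{\mathcal{F}_t}$, rewrites the left-hand side of (i) as $\mathbb{E}^M[F_t(1-N_t)^+]$. The structural fact I need is that under $\mathbb{P}^M$, the reciprocal process $N$ is again a strictly positive continuous (local) martingale, starting from $1$ and with $N_\infty = 0$ $\mathbb{P}^M$-a.s. (this is the standard ``dual'' of the hypothesis $M_\infty=0$ under $\mathbb{P}$ used in Part A). Granting this, the Part A formula applies to $N$ under $\mathbb{P}^M$, yielding $(1-N_t)^+ = \mathbb{P}^M(\mathcal{G}^N_1 \leq t \mid \mathcal{F}_t)$. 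Since $\{N_s = 1\} = \{M_s = 1\}$, one has $\mathcal{G}^N_1 = g_\infty^{(1)}$, and (i) drops out.

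For (ii), I split $|M_t-1| = (M_t-1)^+ + (1-M_t)^+$. The first piece is handled by (i); the second is the classical Part A identity $\mathbb{P}(g_\infty^{(1)}\leq t \mid \mathcal{F}_t) = (1-M_t)^+$ applied directly to $M$ under $\mathbb{P}$. Summing gives (ii). For (iii), take $F_t = 1$ in (i) and in the Part A formula to get
\[
\mathbb{P}^M(g_\infty^{(1)}\leq t) = \mathbb{E}_{\mathbb{P}}[(M_t-1)^+], \qquad \mathbb{P}(g_\infty^{(1)}\leq t) = \mathbb{E}_{\mathbb{P}}[(1-M_t)^+].
\]
These two expressions agree because $\mathbb{E}_{\mathbb{P}}[M_t] = 1$ forces $\mathbb{E}_{\mathbb{P}}[(M_t-1)^+] = \mathbb{E}_{\mathbb{P}}[(1-M_t)^+]$, which yields equality of the one-dimensional marginals $\mathbb{P}(g_\infty^{(1)} \leq t) = \mathbb{P}^M(g_\infty^{(1)}\leq t)$ for every $t$, and hence equality in distribution.

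The main obstacle, as the preceding discussion of items (i)--(ii) before the Proposition already flags, is to handle rigorously the change of measure and the behaviour of $N = 1/M$ under $\mathbb{P}^M$: one needs $M$ to be a \emph{true} martingale so that $\mathbb{P}^M$ is a bona fide probability on each $\mathcal{F}_t$, and one needs the asymptotic statement $N_\infty = 0$ $\mathbb{P}^M$-a.s.\ (equivalently $M_\infty = +\infty$ under $\mathbb{P}^M$) in order to place $N$ in the Part A framework $\mathcal{M}_0^+$. Once these two points are established --- the first from the standing hypothesis, the second by the dual convergence argument --- the rest is pure rewriting.
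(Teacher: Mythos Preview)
Your proof is correct and follows essentially the same route as the paper's: change of measure via $(M_t-1)^+ = M_t(1-1/M_t)^+$ to reduce (i) to the Part A formula applied to $N=1/M$ under $\mathbb{P}^M$, then split $|M_t-1|$ for (ii), and set $F_t\equiv 1$ together with $\mathbb{E}_{\mathbb{P}}[M_t]=1$ for (iii). You are slightly more explicit than the paper in flagging the verification that $N\in\mathcal{M}_0^+$ under $\mathbb{P}^M$ (i.e.\ $N_\infty=0$), which the paper simply asserts in passing.
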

\begin{proof}
\renewcommand{\theenumi}{\roman{enumi}.}
\renewcommand{\labelenumi}{\theenumi}
\begin{enumerate}
\item We write :
\begin{eqnarray*}
 \mathbb{E}_{\mathbb{P}}\left[F_t\left(M_t-1\right)^+\right]&=& \mathbb{E}^M\left[F_t\left(1-\frac{1}{M_t}\right)^+\right]
 \\
&=&\mathbb{E}^M\left[F_t\,1_{\{g_{\infty}^{(1)}\leq t\}}\right],
\end{eqnarray*}
from Theorem 1.1 (Part A), since $\left(\frac{1}{M_t},t\geq 0\right)$ is, under $\mathbb{P}^M$, a martingale which converges to $0$ as $t\to\infty$.
\item 
\[
\mathbb{E}_{\mathbb{P}}\left[F_t|M_t-1|\right]=\mathbb{E}_{\mathbb{P}}\left[F_t\left(M_t-1\right)^+\right]+\mathbb{E}_{\mathbb{P}}\left[F_t\left(1-M_t\right)^+\right],
\]
and we apply both the previous result and Theorem 1.1 (Part A).
\item Taking $F_t=1$ in i., we obtain :
\begin{eqnarray*}
\mathbb{P}^M\left(g_\infty^{(1)}\leq t\right)&=&\mathbb{E}_{\mathbb{P}}\left[\left(M_t-1\right)^+\right]\\
&=&\mathbb{E}_{\mathbb{P}}\left[\left(M_t-1\right)\right]+\mathbb{E}_{\mathbb{P}}\left[\left(M_t-1\right)^-\right]\\
&=&\mathbb{E}_{\mathbb{P}}\left[\left(1-M_t\right)^+\right]\\
&=&\mathbb{E}_{\mathbb{P}}\left[g_\infty^{(1)}\leq t\right],
\end{eqnarray*}
from Theorem 1.1 (Part A).
\end{enumerate}
\end{proof}
We note that Proposition \ref{prop9.4} extends in the case when $(M_t)$ is a true continuous martingale, taking values in $\mathbb{R}^+$, but it may vanish, i.e : $\mathbb{P}\left(T_0<\infty\right)>0$. Indeed, under $\mathbb{P}^M$, $T_0=\infty$ a.s., and the previous arguments are still valid.

In order to obtain some analogue of Theorem 1.1 (Part A) for :
\[\mathbb{E}\left[F_t\left(M_t-K\right)^+\right]\]
in the general case when $(M_t)$ is a local martingale belonging to $\mathcal{L}_0^{(+)}$, we shall proceed directly.

\subsection{The ``common'' Bessel example}
Before discussing in a general framework, we write some version 
of the identity Part A-$(12)$ for $(M_t=\frac{1}{R_t}, t\geq 0)$, where $R$ is a $BES(3)$, starting from $1$.\\
We denote by $\mathbb{W}_a$ and $\mathbb{P}_a^{(3)}$ $(a>0)$ the respective laws of Brownian motion and $BES(3)$ starting at $a>0$, on the canonical space $\mathcal{C}(\mathbb{R}^+,\mathbb{R})$, with $X_t(\omega)=\omega(t)$, and $\mathcal{F}_t=\sigma\{X_s,s\leq t\}$; then, there is the well-known Doob's $h$-transform relationship :
\begin{equation}\label{gir2}
\mathbb{P}_a^{(3)}\vert_{\mathcal{F}_t}=\frac{X_{t\wedge T_0}}{a}\,\cdot\,\mathbb{W}_a\vert_{\mathcal{F}_t}.
\end{equation}

\begin{proposition}\label{prop9.6}
There is the identity :
\begin{equation}\label{9.26}
\mathbb{E}_1^{(3)}\left[F_t\left(\frac{1}{X_t}-1\right)^+\right]=\mathbb{W}_1\left(F_t\,1_{\{\gamma\leq t\leq T_0\}}\right),
\end{equation}
for every $F_t\in\mathcal{F}_t$, and $\gamma=\sup\{t<T_0, X_t=1\}$.
\end{proposition}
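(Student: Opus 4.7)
The plan is to pivot between the two laws $\mathbb{P}_1^{(3)}$ and $\mathbb{W}_1$ using the $h$-transform (9.25), and then invoke Theorem~1.1 of Part~A — not applied to the process $1/X$ (which is only a strict local martingale under $\mathbb{P}_1^{(3)}$, so does not fit the hypothesis of the theorem), but to the stopped Brownian motion $(X_{s\wedge T_0})_{s\geq 0}$ under $\mathbb{W}_1$. This latter process is a bounded-below continuous martingale that converges to $0$ almost surely, hence belongs to $\mathcal{M}_0^+$ with initial value $1$; this is the key sleight of hand that makes Theorem~1.1 applicable.

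First I would apply (9.25) to the left-hand side of (9.26) to get
\[
\mathbb{E}_1^{(3)}\!\left[F_t\Big(\tfrac{1}{X_t}-1\Big)^{\!+}\right]
= \mathbb{W}_1\!\left[F_t\,X_{t\wedge T_0}\Big(\tfrac{1}{X_t}-1\Big)^{\!+}\right].
\]
The density $X_{t\wedge T_0}$ vanishes on $\{t\geq T_0\}$, so only $\{t<T_0\}$ contributes; on that event $X_t>0$ and the product $X_t\bigl(\tfrac{1}{X_t}-1\bigr)^+$ simplifies to $(1-X_t)^+$, giving
\[
\mathbb{E}_1^{(3)}\!\left[F_t\Big(\tfrac{1}{X_t}-1\Big)^{\!+}\right] = \mathbb{W}_1\!\left[F_t\,(1-X_t)^+\,1_{\{t<T_0\}}\right].
\]
Next I would invoke Theorem~1.1 of Part~A with $M_s=X_{s\wedge T_0}$ and $K=1$: the associated last passage time $\mathcal{G}_1(M)=\sup\{s\geq 0:\,X_{s\wedge T_0}=1\}$ coincides with $\gamma=\sup\{s<T_0:\,X_s=1\}$, because after $T_0$ the martingale $M$ is stuck at $0\neq 1$. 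The theorem therefore yields
\[
\mathbb{W}_1\bigl(\gamma\leq t\mid \mathcal{F}_t\bigr) = \bigl(1-X_{t\wedge T_0}\bigr)^{+},
\]
which on $\{t<T_0\}$ is exactly $(1-X_t)^+$.

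To conclude, since $1_{\{t<T_0\}}\in\mathcal{F}_t$ and $\mathbb{W}_1(t=T_0)=0$,
\[
\mathbb{W}_1\!\left[F_t\,(1-X_t)^{+}\,1_{\{t<T_0\}}\right]
= \mathbb{W}_1\!\left[F_t\,1_{\{t<T_0\}}\,\mathbb{W}_1\bigl(\gamma\leq t\mid\mathcal{F}_t\bigr)\right]
= \mathbb{W}_1\!\left[F_t\,1_{\{\gamma\leq t\leq T_0\}}\right],
\]
which is (9.26). The only delicate point is the identification $\mathcal{G}_1(X_{\cdot\wedge T_0})=\gamma$ and the verification that $X_{\cdot\wedge T_0}$ satisfies the hypotheses of Theorem~1.1 under $\mathbb{W}_1$; the rest is essentially bookkeeping via (9.25). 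This also explains the appearance of the bracket $\{\gamma\leq t\leq T_0\}$ rather than simply $\{\gamma\leq t\}$ on the right-hand side: the indicator $1_{\{t\leq T_0\}}$ is what survives from the density factor in (9.25).
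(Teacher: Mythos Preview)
Your proof is correct and follows essentially the same route as the paper: both use the $h$-transform relation (9.25) to rewrite the left-hand side as $\mathbb{W}_1\!\left[F_t\,(1-X_{t\wedge T_0})^+\,1_{\{t\leq T_0\}}\right]$, and then apply formula Part~A--(12) (the conditional version of Theorem~1.1) to the stopped martingale $X_{\cdot\wedge T_0}$ with $F'_t=F_t\,1_{\{t\leq T_0\}}$. Your write-up spells out more of the details (the identification $\mathcal{G}_1(X_{\cdot\wedge T_0})=\gamma$ and the verification of the hypotheses), but the argument is the same.
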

\begin{proof}
Thanks to (\ref{gir2}), the $LHS$ of (\ref{9.26}) equals :
\[\mathbb{W}_1\left(F_t\left(1-X_{t\wedge T_0}\right)^+\,1_{\{t\leq T_0\}}\right),\]
which is equal to the $RHS$ of (\ref{9.26}) thanks to formula Part A-$(12)$, applied with $F'_t=F_t\,1_{\{t\leq T_0\}}$. 
\end{proof}

\textbf{Remarks.}
\begin{enumerate}
\item[a)] We understand that the present discussion is close to recent work by S. Pal and P. Protter motivated by financial bubbles. In fact, we had acces to their preprint \cite{protter2}, at the beginning of April 2008 and after writing this Note 9.
\item[b)] It is worth noting that, as a consequence of (\ref{9.26}), there is the identity 
\[\mathbb{E}_1^{(3)}\left[\left(\frac{1}{X_t}-1\right)^+\right]=\mathbb{W}_1
\left(\gamma\leq t\leq T_0\right),\]
which shows clearly that the $LHS$ is not an increasing function of $t$; indeed, the $RHS$ converges to $0$ as $t\to\infty$, as a consequence of Lebesgue's dominated convergence theorem.\\
In fact, we can compute explicitly this $RHS$, which equals :
\begin{eqnarray*}
r(t)&\equiv& \mathbb{W}_1\left(T_0\geq t\right)-\mathbb{W}_1\left(\gamma\geq t\right)\\
&\equiv&\left(1-\mathbb{W}_1\left(T_0\leq t\right)\right)-\left(1-\mathbb{W}_1\left(\gamma\leq t\right)\right)\\
&\equiv& \mathbb{W}_1\left(\gamma\leq t\right)-\mathbb{W}_1\left(T_0\leq t\right).
\end{eqnarray*}
Recall that, under $\mathbb{W}_1$ : $T_0\overset{\underset{\mathrm{law}}{}}{=}\frac{1}{B_1^2}$ and
 $\gamma\overset{\underset{\mathrm{law}}{}}{=}\frac{\mathbf{U}_{[0,2]}^2}{B_1^2}$; thus :
\begin{equation}\label{9.etoile}
r(t)=\mathbb{P}\left(|B_1|\leq \frac{1}{\sqrt{t}}\right)-\mathbb{P}\left(|B_1|\leq \frac{\mathbf{U}_{[0,2]}}{\sqrt{t}}\right),
\end{equation}
that is :
\begin{equation}\label{9.2.etoile}
r(t)= \sqrt{\frac{2}{\pi}}\int_0^{\frac{1}{\sqrt{t}}}\,dx\,e^{-x^2/2}-\sqrt{\frac{2}{\pi}}\int_0^\infty\,dx\,e^{-x^2/2}\,\left(1-\frac{\sqrt{t}x}{2}\right)^+
\end{equation}
In particular, it easily follows from (\ref{9.etoile}) that :
\begin{equation}\label{9.2.231}
r(t)\sim_{t\to\infty}\sqrt{\frac{2}{\pi t^3}}\left(\frac{1}{6}\right).
\end{equation}
It is also easily proven directly that :
\begin{equation}\label{9.2.232}
r(t)\sim_{t\to 0}\sqrt{\frac{t}{2\pi}}
\end{equation}
Formulae (\ref{9.2.231}), (\ref{9.2.232}) are proven in the Appendix.
\begin{exo}
Prove the formula :
\[\int_0^\infty \lambda e^{-\lambda t}\,dt\,r(t)=\left(\frac{1-e^{-2\sqrt{2\lambda}}}{2\sqrt{2\lambda}}\right)-e^{-\sqrt{2\lambda}}.\]
\end{exo}
\item[b)] There is an extension of formula (\ref{9.26}) for pairs of Bessel processes with respective dimensions $\delta\in(0,2)$, and $4-\delta$; formula (\ref{gir2}) generalizes as :
\[\mathbb{P}_1^{(4-\delta)}\left(F_t\,\left(\frac{1}{X_t^{2-\delta}}-1\right)^+\right)=\mathbb{P}_1^{(\delta)}\left(F_t\,1_{\{\gamma\leq t\leq T_0\}}\right),\]
for every $F_t\in\mathcal{F}_t$, and $\gamma=\sup\{t<T_0, X_t=1\}$.
Proposition \ref{prop9.6} leads us easily to a general statement, whose proof simply mimicks that of  Proposition \ref{prop9.6}, hence it is left to the reader.
\begin{proposition}\label{prop9.3}
Let $\mathbb{Q}\vert_{\mathcal{F}_t}=M_t\cdot\mathbb{P}\vert_{\mathcal{F}_t}$, with $M_0=1$, and let us assume that $T_0=\inf\{t, M_t=0\}<\infty$ $\mathbb{P}$ a.s.\\[0.1cm]
Then, $M'_t=\frac{1_{\{t<T_0\}}}{M_t}$ is well defined and strictly positive under $\mathbb{Q}$; \\$M'_t\in\mathcal{L}_0^{(+)}$; moreover it is a strict local martingale under $\mathbb{Q}$.\\
Finally :
\begin{equation}\label{9.27}
\mathbb{E}_{\mathbb{Q}}\left[F_t\left(M'_t-1\right)^+\right]=\mathbb{E}_{\mathbb{P}}\left[F_t\,1_{\{\gamma\leq t<T_0\}}\right],
\end{equation}
where $\gamma=\sup\{t<T_0, M_t=1\}$.
\end{proposition}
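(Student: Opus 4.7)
The plan is to mimic the argument in the proof of Proposition \ref{prop9.6}: one first verifies that $M'$ is well defined as an honest positive local martingale under $\mathbb{Q}$, then one converts the $\mathbb{Q}$-expectation on the left of (\ref{9.27}) back to a $\mathbb{P}$-expectation, and finally applies Theorem $1.1$ of Part A.

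First, I would check that $T_0 = \infty$ holds $\mathbb{Q}$-a.s. Since $(M_t)$ is in $\mathcal{L}_0^{(+)}$, it stays at $0$ after $T_0$, so $\mathbb{Q}(T_0\le t) = \mathbb{E}_{\mathbb{P}}[M_t\,1_{\{T_0\le t\}}]=0$ for every $t$, and taking $t\to\infty$ gives $\mathbb{Q}(T_0<\infty)=0$. Hence $M'_t = 1/M_t > 0$ holds $\mathbb{Q}$-a.s. for every $t$, so $M'$ is well defined and strictly positive under $\mathbb{Q}$. To show that $M'$ is a $\mathbb{Q}$-local martingale, I would use the standard criterion that $N$ is a $\mathbb{Q}$-local martingale iff $NM$ is a $\mathbb{P}$-local martingale; applied with $N_t = M'_t$ we get $N_t M_t = 1_{\{t<T_0\}}$, which is a bounded $\mathbb{P}$-supermartingale and in fact (being constant equal to $1$ on the $\mathbb{Q}$-full set $\{T_0=\infty\}$) yields the local martingale property for $M'$ under $\mathbb{Q}$ by a standard stopping-time argument at the hitting times of $0$ by $M$.

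Next, I would verify that $M'\in\mathcal{L}_0^{(+)}$ and that it is a strict local martingale by computing its expectation:
\begin{equation*}
\mathbb{E}_{\mathbb{Q}}[M'_t] \;=\; \mathbb{E}_{\mathbb{P}}\!\left[M_t\cdot\frac{1_{\{t<T_0\}}}{M_t}\right] \;=\; \mathbb{P}(T_0>t),
\end{equation*}
which is strictly less than $1$ for $t>0$ (because $T_0<\infty$ $\mathbb{P}$-a.s., so $\mathbb{P}(T_0>t)\to 0$). This simultaneously shows that $M'$ is strict (its mean is not constantly equal to $M'_0=1$) and, as a positive supermartingale with expectation tending to $0$, that $M'_t\to 0$ in $L^1(\mathbb{Q})$, hence a.s.\ along a subsequence and in fact a.s.\ by the supermartingale convergence theorem.

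For the identity (\ref{9.27}), I would write, for $F_t\in \mathcal{F}_t$,
\begin{equation*}
\mathbb{E}_{\mathbb{Q}}[F_t(M'_t-1)^+] \;=\; \mathbb{E}_{\mathbb{P}}\!\left[F_t\, M_t\!\left(\frac{1}{M_t}-1\right)^{\!+}\! 1_{\{t<T_0\}}\right] \;=\; \mathbb{E}_{\mathbb{P}}\!\left[F_t\,1_{\{t<T_0\}}(1-M_t)^+\right],
\end{equation*}
using the algebraic identity $M_t\,(1/M_t-1)^+ = (1-M_t)^+$ on $\{M_t>0\}$. Since $(M_t)$ stays at $0$ after $T_0$, the global last passage time of $M$ at level $1$ coincides with $\gamma=\sup\{t<T_0,\,M_t=1\}$, so applying Theorem $1.1$ of Part A to the $\mathcal{F}_t$-measurable factor $F_t\,1_{\{t<T_0\}}$ gives the right-hand side $\mathbb{E}_{\mathbb{P}}[F_t\,1_{\{\gamma\le t<T_0\}}]$.

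The main delicate point, I expect, will not be any single calculation but rather keeping track of where each object is well defined: $M'$ is only a bona fide process under $\mathbb{Q}$, whereas $\gamma$ and the indicator $1_{\{t<T_0\}}$ live naturally under $\mathbb{P}$. The key bookkeeping is the identity $\mathbb{Q}(T_0=\infty)=1$, which is what allows one to ignore $\{t\ge T_0\}$ on the $\mathbb{Q}$ side and, symmetrically, to restrict the $\mathbb{P}$-side computation to $\{t<T_0\}$ without loss.
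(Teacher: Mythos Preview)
Your argument is correct and is precisely what the paper intends: it states that the proof ``simply mimicks that of Proposition~\ref{prop9.6}, hence it is left to the reader'', and your computation---pass from $\mathbb{Q}$ to $\mathbb{P}$ via the density $M_t$, use $M_t(1/M_t-1)^+=(1-M_t)^+$ on $\{t<T_0\}$, then apply formula Part~A--(12) with $F'_t=F_t\,1_{\{t<T_0\}}$---is exactly that mimicry. One cosmetic point: the product $M'_tM_t=1_{\{t<T_0\}}$ is a strict $\mathbb{P}$-supermartingale rather than a $\mathbb{P}$-local martingale, so the change-of-measure criterion should be applied to the stopped processes $(M')^{T_n}$ with $T_n=\inf\{t:M_t\le 1/n\}$ (for which $M'_{t\wedge T_n}M_t=M_t/M_{t\wedge T_n}$ is a genuine $\mathbb{P}$-martingale), which is what your ``standard stopping-time argument'' is pointing to.
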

Note that, as for Proposition \ref{prop9.6}, the $RHS$ of (\ref{9.27}), for $F_t\equiv 1$, is no longer an increasing function of $t$.\\
The result (\ref{9.27}) should be compared with the general expression given in \cite{madyor}, Proposition 2, p.160, for $\mathbb{E}_{\mathbb{Q}}\left[\left(M'_t-K\right)^+\right]$.
\end{enumerate}
The contruction of $(M'_t)$ in Proposition \ref{prop9.3} is very close to the discussion by Delbaen-Schachermayer \cite{delbaen} of arbitrage within the Bessel processes framework.
\subsection{A general discussion}
This is based on \cite{madyor}; extensions of that discussion to discontinuous local martingales have been done by Chybiryakov \cite{chybi}, Kaji \cite{kagi}.

Let $(M_t,t\geq 0)$ denote an $\mathbb{R}^+$-valued continuous local martingale, with $M_0=a>0$. We denote by $(\mathcal{L}_t^K,t\geq 0)$ the local time at level $K$ of $(M_t,t\geq 0)$.

The following theorem shows how careful one should be when ``integrating Tanaka's formula'' when $(M_{t\wedge\tau},t\geq 0)$ is not a uniformly integrable martingale, with $\tau$ a stopping time. This care should be taken in particular for fixed time $\tau$ when $(M_t)$ is a strict local martingale.
\begin{theorem}\label{9}[\cite{madyor}, Theorem 1]
With the previous notation, and hypotheses, there is the formula :
\begin{equation}\label{77}
\mathbb{E}\left[\left(M_\tau-K\right)^+\right]=\left(a-K\right)^++\frac{1}{2}\,\mathbb{E}\left[\mathcal{L}_\tau^K\right]-c_M(\tau),
\end{equation}
where the ``correction term'' $c_M(\tau)$ does not depend on K, and is equal to either of the four following quantities :
\renewcommand{\theenumi}{\roman{enumi}.}
\renewcommand{\labelenumi}{\theenumi}
\begin{enumerate}
\item $\mathbb{E}\left[a-M_\tau\right]$;
\item $\lim_{K\to\infty}\frac{1}{2}\,\mathbb{E}\left[\mathcal{L}_\tau^K\right]$;
\item $\lim_{\sigma\to\infty} \sigma\,\mathbb{P}\left(\sup_{u\leq \tau}M_u\geq \sigma\right)$;
\item $\lim_{q\to\infty}\sqrt{\frac{\pi}{2}}\left(q\,\mathbb{P}\left(\sqrt{\langle M\rangle_\tau}\geq q\right)\right)$.
\end{enumerate}
\end{theorem}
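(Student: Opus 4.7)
The central observation is that, although the call payoff $(M_t-K)^+$ is unbounded and acquires a correction when $M$ is a strict local martingale, the put payoff $(K-M_t)^+$ is bounded by $K$. Applying It\^o--Tanaka to the put and localizing $M$ by a sequence $(\tau_n)$ reducing it to a true martingale, bounded convergence on the left (the payoff is dominated by $K$) and monotone convergence on $\mathcal L^K$, together with the fact that the stopped stochastic integral $\int_0^{\cdot\wedge\tau_n}\mathbf 1_{\{M_s<K\}}dM_s$ is a true martingale with zero expectation, yield the clean identity
\[\mathbb{E}[(K-M_\tau)^+]=(K-a)^+ + \tfrac12\mathbb{E}[\mathcal L_\tau^K]\]
with no correction. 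Put--call parity $(M_\tau-K)^+=(K-M_\tau)^+ + M_\tau - K$, together with the algebraic identity $(K-a)^+-K=(a-K)^+-a$, then rearranges this into (\ref{77}) with
\[c_M(\tau)=a-\mathbb{E}[M_\tau]=\mathbb{E}[a-M_\tau],\]
which proves (i) and exhibits $c_M(\tau)$ as manifestly independent of $K$. Item (ii) follows by letting $K\to\infty$ in (\ref{77}): since $M$ is a nonnegative local martingale, it is a supermartingale and $\mathbb{E}[M_\tau]\le a<\infty$; then $(M_\tau-K)^+\to 0$ is dominated by $M_\tau\in L^1$, so $\mathbb{E}[(M_\tau-K)^+]\to 0$, while $(a-K)^+=0$ for $K>a$, forcing $\tfrac12\mathbb{E}[\mathcal L_\tau^K]\to c_M(\tau)$.

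For item (iii), set $T_\sigma=\inf\{t:M_t=\sigma\}$ with $\sigma>a$. The stopped process $M_{\cdot\wedge T_\sigma}$ is a nonnegative local martingale dominated by $\sigma$, hence a uniformly integrable true martingale. Optional sampling at $\tau\wedge T_\sigma$ yields
\[a=\mathbb{E}[M_{\tau\wedge T_\sigma}]=\sigma\,\mathbb{P}(T_\sigma\le\tau)+\mathbb{E}\!\left[M_\tau\,\mathbf 1_{\{T_\sigma>\tau\}}\right],\]
and continuity of $M$ identifies $\{T_\sigma\le\tau\}=\{\sup_{u\le\tau}M_u\ge\sigma\}$. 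Letting $\sigma\to\infty$, one has $\mathbf 1_{\{T_\sigma>\tau\}}\uparrow 1$ almost surely (since $\sup_{u\le\tau}M_u<\infty$), so dominated convergence on the second term gives $\sigma\,\mathbb{P}(\sup_{u\le\tau}M_u\ge\sigma)\to a-\mathbb{E}[M_\tau]=c_M(\tau)$.

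Item (iv) is the principal obstacle. The plan is to apply Dambis--Dubins--Schwarz: there exists a Brownian motion $\beta$ (on a possibly enlarged probability space) with $M_t-a=\beta_{\langle M\rangle_t}$, so that $T:=\langle M\rangle_\tau$ becomes a stopping time of $\beta$ and $\sup_{u\le\tau}M_u-a=\sup_{s\le T}\beta_s$. Item (iii) then translates into the Brownian asymptotic $\sigma\,\mathbb{P}(\sup_{s\le T}\beta_s\ge\sigma)\to c_M(\tau)$, and the content of (iv) reduces to the purely Brownian tail comparison
\[\lim_{q\to\infty}\sqrt{\tfrac\pi2}\,q\,\mathbb{P}(\sqrt T\ge q)=\lim_{\sigma\to\infty}\sigma\,\mathbb{P}\!\bigl(\sup_{s\le T}\beta_s\ge\sigma\bigr).\]
The technical heart is this tail comparison: because $T$ is a $\beta$-stopping time rather than deterministic, one does \emph{not} have $\beta_T\overset{d}{=}\sqrt T\cdot N$ with $N\sim\mathcal N(0,1)$, so the reduction must be done carefully. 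I would exploit L\'evy's identity $(S-\beta,S)\overset{d}{=}(|\beta|,\mathcal L^0)$ (applied at $T$) to turn $\sup_{s\le T}\beta_s$ into an $|\beta_T|$-type object, then combine the Gaussian asymptotic $\mathbb{P}(|N|\ge x)\sim\sqrt{2/\pi}\,e^{-x^2/2}/x$ with a Burkholder--Davis--Gundy estimate controlling $\sqrt T$ against $|\beta_T|$; the factor $\sqrt{\pi/2}$ is produced by this Gaussian matching. Making the tail comparison rigorous for a general stopping time $T$ is what I expect will require the most care.
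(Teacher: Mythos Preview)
The paper does not prove this theorem at all: it is quoted verbatim from \cite{madyor} and followed only by the remark ``See \cite{madyor} for a discussion\ldots''. There is therefore no in-text proof to compare your attempt against; your write-up is being measured against the argument in the cited reference.

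Your treatment of (i)--(iii) is correct and is essentially the standard route used in \cite{madyor}: Tanaka on the \emph{put} side (bounded payoff, so localisation and bounded/monotone convergence go through cleanly), then put--call parity to isolate $c_M(\tau)=a-\mathbb{E}[M_\tau]$, then $K\to\infty$ for (ii), and optional stopping at $T_\sigma$ with the bounded stopped martingale for (iii). These parts are fine.

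The gap is (iv), and you have correctly located it, but your sketched plan will not close it. Two concrete obstructions:
\begin{itemize}
\item L\'evy's identity at the stopping time $T$ gives $(S_T-\beta_T,S_T)\overset{d}{=}(|\beta_T|,L_T^0)$, hence $S_T\overset{d}{=}L_T^0$. This does \emph{not} turn $S_T$ into ``an $|\beta_T|$-type object''; it trades one unknown for another and brings you no closer to $\sqrt{T}$.
\item BDG inequalities relate moments of $\sup|\beta|$ and $\sqrt{T}$ only up to universal (and non-sharp) multiplicative constants. They cannot manufacture the exact constant $\sqrt{\pi/2}$, which is the whole content of (iv).
\end{itemize}
The equality of the two tail limits with the precise constant $\sqrt{\pi/2}$ is a genuinely separate result for continuous local martingales, going back to Az\'ema--Gundy--Yor (S\'em.\ Prob.\ XIV) and revisited in Elworthy--Li--Yor; \cite{madyor} invokes this comparison rather than deriving it from BDG. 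A workable direct route after your DDS reduction is: use $\mathbb{E}[\beta_{T\wedge n}]=0$ to write $c_M(\tau)=-\mathbb{E}[\beta_T]=\lim_n\mathbb{E}[\beta_n\mathbf 1_{\{T>n\}}]$, exploit the constraint $\beta_s\ge -a$ on $\{s\le T\}$ (coming from $M\ge 0$) to kill the negative part, and then analyse $\mathbb{E}[\beta_n^+\mathbf 1_{\{T>n\}}]$ via the explicit Brownian transition density; this is where $\sqrt{2/\pi}=\mathbb{E}[|N|]$ (hence $\sqrt{\pi/2}$) appears. Your current sketch does not contain this mechanism.
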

See \cite{madyor} for a discussion of the financial meaning of (\ref{77}). Let us insist, once again, about the formula different, from (\ref{77}), which one obtains when considering the put quantity : $\mathbb{E}\left[\left(K-M_\tau\right)^+\right]$, instead of the call quantity : $\mathbb{E}\left[\left(M_\tau-K\right)^+\right]$ :
\begin{equation}\label{78}
\mathbb{E}\left[\left(K-M_\tau\right)^+\right]=\left(K-a\right)^++\frac{1}{2}\,\mathbb{E}\left[\mathcal{L}_\tau^K\right].
\end{equation}
Clearly, there is agreement between the identities (\ref{77}),(\ref{78}) and the expression $(i)$ of $c_M(\tau)$.
\newpage
\section{Note 10 : A temporary conclusion}
\subsection{What have we learnt?}
\begin{enumerate}
\item[a)]Clearly, the knowledge of the call/put quantities : \[\mathbb{E}\left[\left(M_t-K\right)^{\pm}\right],\] 
for all strikes and maturities, is equivalent to the knowledge of the 1-dimensional marginals of $M$. We have learnt that it is also equivalent to the knowledge of the laws of last passage times $\mathcal{G}_K$. 
\item[b)] In our discussion, some fundamental quantities are certain conditional expectations, namely :
\[\mathbb{E}\left[M_\infty|S_\infty\right]\:\:\mathrm{and}\:\:\mathbb{E}\left[\sigma_s^2|M_s\right],\]
which would certainly deserve more study. However, once introduced, they allow to develop our discussion in a purely martingale framework instead of the more usual Markovian framework.
\end{enumerate}
\subsection{Where is all this going?}
This course seems to open a number of fairly general questions :
\begin{enumerate}
\item[a)] (Note 1). May the universal character of the law of $\sup_{u\geq t}M_u$ be extended to a study involving :
\[\int_t^\infty d\langle M\rangle_u\,f(M_u),\:\mathrm{for\:suitable}\:f\:\mathrm{'s?}\] 
\item[b)] (Note 6). Can one develop a stochastic calculus with respect to the two-parameter filtration $(\mathcal{F}_{s,t})$? Creating a two-parameter Itô stochastic integration has already been done, in the context of the study of double points of 2- and 3-dimensional Brownian motion by J. Rosen, M. Yor \cite{rosen}. We would also like to connect this study with the fairly well developed Markovian Fields theory. (see, e.g, Dang-Ngoc-Yor \cite{dang}, Royer-Yor \cite{royer}).
\item[c)] (Note 8 )The Black-Scholes formula has been the first fundamental option pricing formula, and may be considered to have fathered many other such formulae, e.g : for quantile options, Asian options, and so on... Could one develop a last time or more generally a past-future viewpoint for these options?
\item[d)] We plan to develop answers to some of these questions in \cite{amel}.
\end{enumerate}
\subsection{Dealing with last passage times :}
Throughout both Part A and Part B, last passage times (of local martingales) play a crucial role. It has been comforting that in private discussions around these lectures we did not experience. The reluctance about these ``non-stopping times'', which existed during the seventies, as K. L. Chung \cite{chung} describes very well : ``For some reason, the notion of a last exit time would not be dealt with openly and directly. This may be due to the fact that such a time is not an ``optional'', or ``stopping time'', does not belong to the standard equipment, and so must be evaded at all costs.''

The positive attitude (which we experienced in these private discussions) may be partly be due to the fact that the main results of enlargments of filtrations theory  are nowadays reasonably well-known.

On the other hand, one should not deny that, when dealing with last passage times, difficulties occur, due to the fact that these times look forever into the future. Undoubtedly, J. Akahori's suggestion (Note 6) originated from these diffculties.
\newpage

\section{Appendix : Developments of some particular points-solutions to the exercises for both Part A and Part B}

This appendix is decomposed into sections 11-1, 11-2, ... which correspond to the Notes 1, 2, ...
\subsection{Note 1}
\begin{itemize}
\item \textbf{Solution to Exercise 1.1 :}\\
From the strong Markov property :
\begin{eqnarray*}
G_a^{(\nu)}&=&\sup\{t, B_t^{(\nu)}=a\}\\
&=&T_a^{(\nu)}+\sup\{u, B_{T_a^{(\nu)}+u}^{(\nu)}-a=0\}\\
&=&T_a^{(\nu)}+\tilde{G}_0^{(\nu)},
\end{eqnarray*}
where : $\tilde{G}_0^{(\nu)}=\sup\{u, \tilde{B}_u+\nu u=0\}$, and $(\tilde{B}_u,u\geq 0)$ is a Brownian motion starting from $0$ and independent from $(B_t)$.
Now, the law of $\tilde{G}_0^{(\nu)}$ has a simple expression, since :
\begin{eqnarray*}
\tilde{G}_0^{(\nu)}&=&\sup\{u, \tilde{B}_u=-\nu u\}\\
&\overset{\underset{\mathrm{law}}{}}{=}&\frac{1}{T^{(\nu)}(\tilde{B})}\:\:\mathrm{(:\:time\:inversion)}\\
&\overset{\underset{\mathrm{law}}{}}{=}&\frac{1}{\nu^2\tilde{T}_1}\:\:\mathrm{(:\:scaling)}\\
&\overset{\underset{\mathrm{law}}{}}{=}&\frac{\tilde{B}_1^2}{\nu^2}\:\:\mathrm{(:\:classical\:result)}.
\end{eqnarray*}
Hence :
\[\left(T_a^{(\nu)},G_a^{(\nu)}\right)\overset{\underset{\mathrm{law}}{}}{=}\left(T_a^{(\nu)}, T_a^{(\nu)}+\frac{\tilde{B}_1^2}{\nu^2}\right),\]
with $\tilde{B}_1$ independent from $T_a^{(\nu)}$.\\
\item \textbf{Solution to Exercise 1.2 :}\\
We first have thanks to Theorem $A-1.1$ :
\[
\mathbb{E}\left[\left(\mathcal{E}_t-K\right)^+\right]=K\mathbb{P}\left(0<G_{\ln{K}}^{(-1/2)}\leq t\right).
\]
The laws of $G_{a}^{(\nu)}$ and $G_{a}^{(-\nu)}$ are related via the Cameron-Martin formula :
\begin{equation}\label{appendixetoile}
\mathbb{W}^{(-\nu)}\vert_{\mathcal{F}_{G_a}\cap(G_a>0)}=\exp{(-2\nu a)}\mathbb{W}^{(\nu)}\vert_{\mathcal{F}_{G_a}}.
\end{equation}
Hence :
\[\mathbb{P}\left(G_a^{(-\nu)}\in dt|G_a^{(-\nu)}>0\right)=\mathbb{P}\left(G_a^{(\nu)}\in dt\right).\]
Moreover thanks to (\ref{appendixetoile}) :
\[\mathbb{P}\left(G_a^{(-\nu)}>0\right)=\exp{(-2\nu a)}.\]
So the result :
\[
\mathbb{E}\left[\left(\mathcal{E}_t-K\right)^+\right]=\mathbb{P}\left(G_{\ln{K}}^{(1/2)}\leq t\right),
\]
follows.
We leave the end of the proof to the reader (take derivative in $t$ of the $RHS$ of $(A-5)$ and recover the density of the law of $G_{\ln{K}}^{(1/2)}$ given in $(A-22)$.\\
\item \textbf{Solution to Exercise 1.3 :}\\
We recall that :
\[\mathbb{E}\left[\left(\mathcal{E}_t-K\right)^\pm\right]=\left(1-K\right)^\pm+\frac{1}{2}\mathbb{E}\left[\mathcal{L}_t^K(\mathcal{E})\right].\]
From the time occupation formula :
\[\mathbb{E}\left[\int_0^tf(\mathcal{E}_s)\,d\langle \mathcal{E}\rangle_s\right]
=\mathbb{E}\left[\int_0^tf(\mathcal{E}_s)\,\mathcal{E}_s^2\,ds\right]
=\mathbb{E}\left[\int_0^\infty f(K)\mathcal{L}_t^K(\mathcal{E})\,dK\right],\]
and the change of variable $e^{y-\frac{s}{2}}=K$, we deduce :
\[
\int_0^t \frac{ds}{\sqrt{2\pi s}}\int_0^\infty f(e^{y-\frac{s}{2}})\,e^{2y-s}\,e^{-\frac{y^2}{2s}}\,dy
=\int_0^t \frac{ds}{\sqrt{2\pi s}}\int_0^\infty f(K)\,K\,e^{-\frac{1}{2s}\left(\log{K}+\frac{s}{2}\right)^2}\,dK,
\]
so that :
\begin{eqnarray*}
\mathbb{E}\left[\mathcal{L}_t^K(\mathcal{E})\right]&=&\int_0^t \frac{ds}{\sqrt{2\pi s}}\,K\,e^{-\frac{1}{2s}\left(\log{K}+\frac{s}{2}\right)^2}\\
&=&\sqrt{K}\int_0^t \frac{ds}{\sqrt{2\pi s}}\,e^{-\frac{(\log{K})^2}{2s}-\frac{s}{8}}\\
&=&2\sqrt{K}\mathbb{E}\left[1_{\{4B_1^2\leq t\}}\exp{-\frac{(\log{K})^2}{8B_1^2}}\right]
\end{eqnarray*}
since the density of the r.v $B_1^2$ is : $\frac{ds}{\sqrt{2\pi s}}e^{-\frac{s}{2}}1_{[0,\infty[}(s)$.
\end{itemize}

\subsection{Note 2}
\begin{itemize}
\item \textbf{Discussion of Example 2.1 :}\\
Thanks to time reversal, one has :
\[(B_{T_0-u},u\leq T_0)\overset{\underset{\mathrm{law}}{}}{=}(R_u,u\leq G_1(R)),\]
where $(R_u,u\geq 0)$ is a $BES_0(3)$ and $G_1(R)=\inf\{u\geq 0, R_u=1\}$.
Hence :
\[T_0-G_1(M)\overset{\underset{\mathrm{law}}{}}{=}T_K(R),\]
where on the $LHS$, $T_0$ and $G_1(M)$ are independent and on the $RHS$, $T_K(R)=\inf\{u\geq 0, R_u=K\}$. Taking the Laplace transform in $(\frac{\lambda^2}{2})$ of both sides, one obtains :
\begin{equation}\label{laplace1}
e^{-\lambda}=\frac{\lambda K}{\sinh{\lambda K}}\mathbb{E}\left[e^{-\frac{\lambda^2}{2}G_K(M)}\right],
\end{equation}
since :
\[\mathbb{E}\left[e^{-\frac{\lambda^2}{2}T_0}\right]=e^{-\lambda}; \quad \mathbb{E}\left[e^{-\frac{\lambda^2}{2}G_K(R)}\right]=\frac{\lambda K}{\sinh{\lambda K}}.\]
 Formula (\ref{laplace1}) becomes :
\begin{eqnarray*}
\mathbb{E}\left[e^{-\frac{\lambda^2}{2}G_K(M)}\right]&=&\frac{e^{-\lambda(1-K)}-e^{-\lambda(1+K)}}{2\lambda K}\\
&=& \frac{1}{2K}\int_{1-K}^{1+K}\exp{(-\lambda x)}\,dx\\
&=& \frac{1}{2K}\int_{1-K}^{1+K} \mathbb{E}\left[\exp{-\frac{\lambda^2}{2}T_x}\right],
\end{eqnarray*}
where $T_x$ is the first hitting time at level $x$ of a Brownian motion $(\beta_t,t\geq 0)$ starting from $0$; thus :
\begin{eqnarray*}
\mathbb{E}\left[e^{-\frac{\lambda^2}{2}G_K(M)}\right]&=&\frac{1}{2K}\int_{1-K}^{1+K} \mathbb{E}\left[\exp{-\frac{\lambda^2x^2}{2\beta_1^2}}\right]\\
&=& \mathbb{E}\left[\exp{-\frac{\lambda^2}{2}\frac{\mathbf{U}_K^2}{\beta_1^2}}\right],
\end{eqnarray*}
since :
\[T_x\overset{\underset{\mathrm{law}}{}}{=}\frac{x^2}{\beta_1^2}.\]
\item \textbf{Discussion of Example 2.1 :}\\
We observe that :
\[\sup\{t\geq 0, \frac{1}{R_t}=K\}=\sup\{t\geq 0, R_t=\frac{1}{K}\}.\]
We consider the process $R$ as obtained by time reversal from a Brownian motion $(B_t,t\geq 0)$ starting from $\frac{1}{K}$ and killed when it first hits $0$.
Hence, with the same notation as before, we have :
\[T_0\overset{\underset{\mathrm{law}}{}}{=}G_K(M)+T_1(R),\quad\mathrm{(with\:independence)}.\]
Thus, taking once again the Laplace transform in $(\frac{\lambda^2}{2})$ of both sides, one obtains :
\begin{eqnarray*}
e^{-\lambda\frac{1}{K}}&=&\frac{\lambda}{\sinh{\lambda}}\mathbb{E}\left[e^{-\frac{\lambda^2}{2}G_K(M)}\right],\\
\mathbb{E}\left[e^{-\frac{\lambda^2}{2}G_K(M)}\right]&=&\frac{1}{2\lambda}\left(\exp{\left(-\lambda(\frac{1}{K}-1)\right)}-\exp{\left(-\lambda(\frac{1}{K}+1)\right)}\right)\\
&=&\frac{1}{2}\int_{\frac{1}{K}-1}^{\frac{1}{K}+1}e^{-\lambda x}\,dx.
\end{eqnarray*}
We leave the end of the proof to the reader (conclude as above).\bigskip
\item\textbf{Solution to Exercise 2.4 :}\\
From Itô's formula, one obtains :
\[\sigma_t=\sinh{(B_t)}e^{-\frac{t}{2}}.\]
Hence : $\sigma_t^2=M_t^2-e^{-t}$; therefore, using Theorem $A-2.1$ :
\[
\mathbb{P}\left(\mathcal{G}_K\in dt\right)=\left(1-\frac{1}{K}\right)^{+}\epsilon_0(dt)+\frac{1_{\{t>0\}}}{2K}\,\theta_t(K)\,m_t(K)\,dt.
\]
We leave the computation of $m_t(K)$ to the reader.
\end{itemize}
\setcounter{subsection}{5}
\subsection{Note 6}
\begin{itemize}
\item \textbf{Solution to Exercise 6.2 :}\\
We recall the generating function expansion of the Hermite polynomials : 
\[\exp{\left(\lambda x-\frac{\lambda^2 u}{2}\right)}=\sum_{n=0}^\infty \frac{\lambda^n}{n!}H_n(x,u).\]
Now we write :
\begin{eqnarray*}
& &\exp{\left(al-\frac{dl^2}{2}\right)}\exp{\left(b\nu-\frac{c\nu^2}{2}\right)}\exp{(fl\nu)}\\
&=&\left(\sum_{k=0}^\infty \frac{l^k}{k!}H_k(a,d)\right)+ \left(\sum_{j=0}^\infty \frac{\nu^j}{j!}H_j(b,c)\right)\left(\sum_{m=0}^\infty \frac{(l\nu)^m}{m!}f^m\right)\\
&=&\sum_{k,j,m=0}^\infty \frac{l^{k+m}\nu^{j+m}}{k!j!m!}H_k(a,d)H_j(b,c)f^m.
\end{eqnarray*}
Now, we may write :
\begin{eqnarray*}
& &\sum_{p,q=0}^\infty
l^p\nu^q\mathcal{H}_{p,q}(a,b,c,d,f)\\
&=&\sum_{k,j,n=0}^\infty
\frac{l^{k+m}\nu^{j+m}}{k!j!m!}H_k(a,d)H_j(b,c)f^m\\
&=&\sum_{m=0}^\infty \frac{f^m}{m!}\left(\sum_{p=m}^\infty
  \frac{l^p}{(p-m)!}H_{p-m}(a,d)\right)\left(\sum_{q=m}^\infty \frac{\nu^q}{(q-m)!}H_{q-m}(b,c)\right) \\
&=&\sum_{p,q=0}^\infty l^p\nu^q\left(\sum_{m\leq p,q} \frac{f^m}{m!}H_{p-m}(a,d)H_{q-m}(b,c)\frac{1}{(p-m)!(q-m)!}\right)
\end{eqnarray*}
Consequently, we have :
\[\mathcal{H}_{p,q}(a,b,c,d,f)=\sum_{m\leq p,q} H_{p-m}(a,d)H_{q-m}(b,c)\frac{1}{(p-m)!(q-m)!}.\]
\item \textbf{Solution to Exercise 6.1 :}\\
First, we need to write :
\begin{eqnarray*}
& &-\frac{2}{(t-s)}(x+\nu s-l)(y+\nu t-l)\\
&=&-\frac{2}{t-s}\left[(x+\nu
  s)(y+\nu t)-l(x+y+\nu(s+t))+l^2\right] \\
&=&-\frac{2}{t-s}\left[xy+\nu(sy+tx)+\nu^2(st)-l(x+y)-l\nu(s+t)+l^2\right]\\
&\equiv&al-\frac{dl^2}{2}+b\nu-\frac{c\nu^2}{2}+fl\nu.
\end{eqnarray*}
Comparing the two developments as polynomials in $(l,\nu)$, we obtain :
\begin{eqnarray*}
a&=&\frac{2(x+y)}{t-s};\quad b=-\frac{2}{t-s}(sy+tx)\\
c&=&\frac{4(st)}{t-s};\quad d=\frac{4}{t-s}; f=\frac{2(s+t)}{t-s}.
\end{eqnarray*}

Consequently, we obtain :
\[
H_{p,q}(s,t;x,y)=\mathcal{H}_{p,q}\left(\frac{2(x+y)}{t-s},\frac{-2(sy+tx)}{t-s},\frac{4st}{t-s},\frac{4}{t-s},\frac{2(s+t)}{t-s}\right).
\]
\end{itemize}
\setcounter{subsection}{8}
\subsection{Note 9}
\begin{itemize}
\item \textbf{Asymptotic study of $r(t)$, $t\to 0$, and $t\to\infty$.}
\begin{enumerate}
\item[a)] ($t\to 0$) : We use 
\[
r(t)=\mathbb{E}_1^{(3)}\left[\left(\frac{1}{X_t}-1\right)^+\right]\underset{t\to 0}{\sim}\mathbb{E}_1^{(3)}\left[\left(1-X_t\right)^+\right].
\]
Now, using the decomposition : 
\[X_t=1+\beta_t+\int_0^t\frac{ds}{X_s},\]
with $(\beta_t)$ a one-dimensional Brownian motion starting from $0$, one obtains :
\[r(t)\underset{t\to 0}{\sim}\mathbb{E}_1^{(3)}\left[\left(-\beta_t-\int_0^t\frac{ds}{X_s}\right)^+\right],\]
and, it is now easily seen that :
\begin{eqnarray*}
r(t)&\sim&\mathbb{E}\left[\left(-\beta_t\right)^+\right]\\
&\sim& \sqrt{t}\left(\frac{1}{2}\right)\mathbb{E}\left[|\mathbf{N}|\right]\\
&\sim& \sqrt{\frac{t}{2\pi}}
\end{eqnarray*} 
\item[b)] ($t\to\infty$) : From (\ref{9.etoile}), we obtain :
\begin{eqnarray*}
r(t)&=&\mathbb{P}\left(|B_1|\leq \frac{1}{\sqrt{t}}\right)-\mathbb{P}\left(|B_1|\leq \frac{2\mathbf{U}}{\sqrt{t}}\right)\\
&=&\sqrt{\frac{2}{\pi}}\left(\int_0^{\frac{1}{\sqrt{t}}} dx\,e^{-\frac{x^2}{2}}-\mathbb{E}\left[\int_0^{\frac{2\mathbf{U}}{\sqrt{t}}} dx\,e^{-\frac{x^2}{2}}\right]\right)\\
&\sim&\sqrt{\frac{2}{\pi t}}\left(\int_0^1 dy\,e^{-\frac{y^2}{2t}}-\mathbb{E}\left[2\mathbf{U}\int_0^1 dy\,e^{-2\frac{y^2\mathbf{U}^2}{t}}\right]\right)\\
&=&\sqrt{\frac{2}{\pi t}}\left(\int_0^1 dy\,\left(e^{-\frac{y^2}{2t}}-1\right)-\mathbb{E}\left[2\mathbf{U}\int_0^1 dy\,\left(e^{-2\frac{y^2\mathbf{U}^2}{t}}-1\right)\right]\right)\\
&\sim&\sqrt{\frac{2}{\pi t}}\left(\int_0^1 dy\,\frac{-y^2}{2t}-\mathbb{E}\left[(2\mathbf{U})\int_0^1 dy\,\left(-\frac{2y^2\mathbf{U}^2}{t}\right)\right]\right) \\
\end{eqnarray*}
Hence :
\begin{eqnarray*}
r(t)&\sim& \sqrt{\frac{2}{\pi t}}\left(\mathbb{E}\left[\frac{4\mathbf{U}^3}{t}\int_0^1 dy\,y^2\right]-\frac{1}{2t}\left(\frac{1}{3}\right)\right)\\
&=&\sqrt{\frac{2}{\pi t^3}}\left(\mathbb{E}\left[4\mathbf{U}^3\right]\left(\frac{1}{3}\right)-\frac{1}{6}\right)\\
&=&\sqrt{\frac{2}{\pi t^3}}\left(\frac{1}{6}\right)
\end{eqnarray*}
From the representation (\ref{9.etoile}) of $r(t)$ :
\[r(t)=\mathbb{P}\left(|B_1|\leq \frac{1}{\sqrt{t}}\right)-\mathbb{P}\left(|B_1|\leq \frac{2\mathbf{U}}{\sqrt{t}}\right),\] 
we can show :
\[r(t)=\sqrt{\frac{2}{\pi t}}\sum_{n=1}^\infty \frac{r_n}{t^n},\]
with : 
\[r_n=\frac{(-1)^n}{n!(2n+1)}\left(\frac{1}{2^n}-\frac{2^n}{n+1}\right).\]
(Note : $r_1=\frac{1}{6}$, $r_2=-\frac{13}{120}$).
\item[c)] We recall that :
\[r(t)\equiv \mathbb{W}_1\left(T_0\geq t\right)-\mathbb{W}_1\left(\gamma\geq t\right) .\]
Let $\theta_0$ and $\theta_1$ denote the respective densities of $T_0$ and $\gamma$; then : 
\[r'(t)=-\theta_0(t)+\theta_1(t),\]
hence, $r'(t)=0$ if and only if : $\theta_0(t)=\theta_1(t)$. To be continued...
\item[d)] \underline{A simple integral formula and a graph.}\\
From (\ref{9.etoile}), it is easy enough to obtain :
\[r(t)=\sqrt{\frac{2}{\pi}}\rho(\frac{1}{\sqrt{t}}),\]
with :
\[\rho(u)=\frac{1}{2u}\left(1-e^{-2u^2}\right)-\int_u^{2u} dx\,e^{-\frac{x^2}{2}}.\]
We now show the graph of $\rho$, for which we thank G. Pagès.
\begin{center}
\begin{figure}\caption{\underline{Graph of $\rho$}} 
\includegraphics[scale=0.7]{yor.ps}
\end{figure}
\end{center}
\item[e)] \underline{Other properties of $r$ :}
\begin{itemize}
\item We note that $(r(t),t\geq 0)$ is integrable over $\mathbb{R}^+$,
  since : $r(t)\sim \frac{C}{t^{3/2}}$, as $t\to\infty$. More precisely, we
  have :
\[\int_0^\infty dt\,r(t)=\int_0^\infty dt\,\mathbb{W}_1(\gamma\leq t\leq
T_0)=
\mathbb{W}_1(T_0-\gamma)=\mathbb{E}_0^{(3)}(T_1)=\frac{1}{3}.\]
\item Moreover, $\int_0^\infty dt\,t^{\alpha}r(t)<\infty$ iff
  $-\frac{3}{2}<\alpha<\frac{1}{2}$.
\item We note that $\tilde{r}(t)=3r(t)$, is a probability density, and we
  denote by $\tilde{R}$ a random variable such that:
\[\mathbb{P}(\tilde{R}\in dt)=3r(t)\,dt.\]
The Laplace transform of $\tilde{R}$ is given by :
\begin{eqnarray*}
\mathbb{E}\left[e^{-\lambda \tilde{R}}\right]&=&\int_0^\infty dt\,
3e^{-\lambda t}\:\mathbb{W}_1(\gamma\leq t\leq T_0)\\
&=&\frac{3}{\lambda}\:\mathbb{W}_1\left(e^{-\lambda \gamma}-e^{-\lambda
    T_0}\right)\\
&=&\frac{3}{\lambda}\:\mathbb{W}_1\left(e^{-\lambda \gamma}\right)\mathbb{W}_1\left(1-e^{-\lambda(T_0-\gamma)}\right)
\end{eqnarray*}
Since : 
\[\mathbb{W}_1\left(e^{-\lambda(T_0-\gamma)}\right)=\mathbb{E}_0^{(3)}\left[e^{-\lambda
  T_1}\right]=\frac{\sqrt{2\lambda}}{\sinh{(\sqrt{2\lambda})}},\]
and :
\[\mathbb{W}_1\left(e^{-\lambda
    \gamma}\right)\mathbb{W}_1\left(e^{-\lambda(T_0-\gamma)}\right)=e^{-\sqrt{2\lambda}},\]
we finally obtain :
\[
\mathbb{E}\left[e^{-\lambda \tilde{R}}\right]=\frac{3\left(1-e^{-2\sqrt{2\lambda}}-2\sqrt{2\lambda}e^{-\sqrt{2\lambda}}\right)}{(2\lambda)^{3/2}}.
\]
Can we exhibit a ``natural'' Brownian variable which is distributed as $\tilde{R}$?
\end{itemize}
\end{enumerate}
\end{itemize}
\newpage


\begin{thebibliography}{99}
\bibitem{yuri} J. \textsc{Akahori}, Y. \textsc{Imamura} and Y. \textsc{Yano}(April 2008).
On the pricing of options written on the last exit time.
\textit{To appear in : Methodology and Computing in Applied Probability.}

\bibitem{azyor3} J.\textsc{Az\'{e}ma} and M.\textsc{Yor}(1989).
Etude d'une martingale remarquable. S\'{e}minaire de Probabilit\'{e}s {X}{X}{I}{I}{I}
\textit{Lecture Notes in Maths.} \textbf{1372},88--130.

\bibitem{amel0} A. \textsc{Bentata} and M. \textsc{Yor}(June 2008).
From Black-Scholes and Dupire formulae to last passage times of local martingales. Part A : The infinite time horizon.
\textit{Prepublication LPMA-Paris VI}.\textbf{1223}.

\bibitem{amel} A. \textsc{Bentata} and M. \textsc{Yor}(April 2008).
Further developments stemming from the last passage times viewpoint on Black-Scholes formula.
\textit{Research Notes}.

\bibitem{amel2} A. \textsc{Bentata} and M. \textsc{Yor}(April 2008).
  Around the last passage times viewpoint on Black-Scholes type options : some new bits and pieces.
  \textit{Research Notes}.

\bibitem{ewald} P. \textsc{Carr}, C. -O \textsc{Ewald} and Y. \textsc{Xiao}(January 2008).
On the qualitative effect of volatility and duration on prices of Asian options.
\textit{Finance Research Letters (forthcoming)}

\bibitem{chaumont} L. \textsc{Chaumont} and M. \textsc{Yor}(2003).
Exercises in Probability.
\textit{Cambridge University Press}.  

\bibitem{chung} K. L. \textsc{Chung}(1973).
Probabilistic approach in potential theory to the equilibrium problem
\textit{Ann. Inst. Fourier} \textbf{3}, no 23, 313--322.

\bibitem{chybi} O. \textsc{Chybiryakov}(2007).
It\^{o}'s Integrated Formula for Strict Local Martingales with Jumps. S\'{e}minaire de Probabilit\'{e}s {X}{L}.
\textit{Lecture Notes in Maths.} \textbf{1899},375--388.

\bibitem{dang} N. \textsc{Dang-Ngoc} and M. \textsc{Yor}(1978).
Champs markoviens et mesures de Gibbs sur $\mathbb{R}$.
\textit{Ann. Scien. ENS} \textbf{11}, no 4, 29--69.

\bibitem{dass} A. \textsc{Dassios}(1995).
The distribution of the quantile of a Brownian motion with drift and the pricing related path-dependent options.
\textit{Annal. Appl. Proba.} \textbf{5}, 389--398. 

\bibitem{delbaen} F. \textsc{Delbaen} and W. \textsc{Schachermayer}(1995).
Arbitrage probabilities in Bessel processes and their relations to local martingales.
\textit{PTRF} \textbf{102}, no 3, 357--366.


\bibitem{doob} J. L. \textsc{Doob}(1984).
Classical Potential Theory and Its Probabilistic Counterpart.
\textit{Springer}.  

\bibitem{embr} P. \textsc{Embrechts}, L. \textsc{Rogers} and M. \textsc{Yor}(1995).
A proof of Dassios' representation of the $\alpha$-quantile of Brownian motion with drift.
\textit{Annal. Appl. Proba.} \textbf{5}, 757--767.

\bibitem{fujita} T. \textsc{Fujita} and M. \textsc{Yor}(April 2008).
Some past-future martingales for Symmetric Random Walk and Symmetric Lévy processes.
\textit{Preprint.}

\bibitem{jacod} J. \textsc{Jacod} and P. \textsc{Protter}(1988).
Time reversal of L\'{e}vy processes.
\textit{The Annals of Probability.} \textbf{16}, no.2, 620--641.

\bibitem{jeanblanc} M. \textsc{Jeanblanc}, M . \textsc{Chesney} and M . \textsc{Yor}(Forthcoming 2008).
Mathematical Methods for financial Markets.
\textit{Springer}. 

\bibitem{jeulyor2} T.  \textsc{Jeulin} and M. \textsc{Yor}(1979).
In\'{e}galit\'{e} de Hardy, semimartingales et faux-amis.
\textit{Lecture Notes in Maths.} \textbf{721},332--359.

\bibitem{kagi} S. \textsc{Kaji}(2007).
The tail estimation of the quadratic variation of a quasi left continuous local martingale.
\textit{Osaka J. Math.}\textbf{44}, no. 4, 893--907.

\bibitem{madyor} D. \textsc{Madan} and M. \textsc{Yor}(2006).
It\^{o}'s integrated formula for strict local martingales. S\'{e}minaire de Probabilit\'{e}s {X}{X}{X}{I}{X}.
\textit{Lecture Notes in Maths.} \textbf{1874},157--170.  

\bibitem{madyorroy} D. \textsc{Madan}, B. \textsc{Roynette} and M. \textsc{Yor}(2008).
Option Prices as Probabilities.
\textit{{F}inance {R}esearch {L}etters.}\textbf{5},79--87.  

\bibitem{madyorroy3x} D. \textsc{Madan}, B. \textsc{Roynette} and M. \textsc{Yor}(February 2008).
An alternative expression for the Black-Scholes formula in terms of Brownian first and last passage times.
\textit{Preprint. IEC Nancy.}, no. 8. Not for publication.

\bibitem{madyorroy2} D. \textsc{Madan}, B. \textsc{Roynette} and M. \textsc{Yor}(March 2008).
From Black-Scholes formula, to local times and last passage times for certain submartingales.
\textit{Preprint. IEC Nancy.}, no. 14. Not for publication.

\bibitem{madyorroy4} D. \textsc{Madan}, B. \textsc{Roynette} and M. \textsc{Yor}(April 2008)
Unifying Black-Scholes type formulae which involve last passage times up to finite horizon.
\textit{Asia-Pacific Journal of Financial Markets.}

\bibitem{manyor2} R. \textsc{Mansuy} and M. \textsc{Yor}(2005).
Harnesses, L\'{e}vy bridges and Monsieur Jourdain.
\textit{Stochastic processes and their applications.} \textbf{115}, no. 2, 329--338.  

\bibitem{nguyen} L. \textsc{Nguyen} and M. \textsc{Yor}(2005).
Some martingales associated to reflected Lévy processes.
\textit{Lecture Notes in Maths.-Séminaire Probabilités XXXVIII},\textbf{1857}, 42--69.

\bibitem{pages} G. \textsc{Pag\`{e}s}(2008)
Introduction to Numerical Probability for Finance.
\textit{Notes from Lectures in LPMA-Paris VI.} 

\bibitem{protter2} S. \textsc{Pal} and P. \textsc{Protter}(November 2007).
Strict local martingales, bubbles and no early exercise.
\textit{Preprint. Cornell University.}  

\bibitem{rosen} J. \textsc{Rosen} and M. \textsc{Yor}(1991).
Tanaka formulae and renormalization for triple intersections of Brownian motion in the plane.
\textit{Ann. Proba.} \textbf{19}, 142--159 .

\bibitem{royer} G. \textsc{Royer} and M. \textsc{Yor}(1976).
Représentation intégrale de certaines mesures quasi-invariantes sur $\mathcal{C}(\mathbb{R})$; mesures extrémales et propriété de Markov.
\textit{Ann. Inst. Fourier} \textbf{XXVI},no 2, 7--24.  

\bibitem{sesh} V. \textsc{Seshadri}(1988).
Exponential models, Brownian motion and independence.
\textit{Can. J. of Stat.} \textbf{16}, 209--221.  

\bibitem{widder} D. V. \textsc{Widder}(1975).
The Heat Equation.
\textit{Academic Press.}

\bibitem{williams3} D. \textsc{Williams}(1973).
Some basic theorems on harnesses.
\textit{Stochastic Analysis (a tribute to the memory of
Rollo Davidson)}, 349--363.  

\bibitem{williams4} D. \textsc{Williams}(1980).
Brownian motion as a harness.
\textit{University of Swansea} \textbf{unpublished}. 

\bibitem{yor5} M. \textsc{Yor}(1978).
Rappels et Préliminaires généraux-Temps Locaux.
\textit{Ast\'{e}risque.} \textbf{52-53}, 23--36.

\bibitem{zurich} M. \textsc{Yor}(1992).
Some Aspects of Brownian Motion. Part I.
\textit{Birkh\"{a}user.}    
New edition to appear in Universitext-Springer (2008).

\bibitem{yor} M. \textsc{Yor}(1995).
The distribution of Brownian quantiles.
\textit{J. App. Prob.} \textbf{32}, 405--416.

\bibitem{yor2} M. \textsc{Yor}(1997).
Some remarks about the joint law of Brownian motion and its supremum.
\textit{Lecture Notes in Maths.} \textbf{1655}, 306--314 . 

\bibitem{yor3} M. \textsc{Yor}(2007).
Some remarkable properties of Gamma processes.
\textit{Advances in Mathematical Finance.} editors : \textit{Michael C. Fu, Robert A. Jarrow, Ju-Yi J. Yen and Robert J. Elliott}, Birkha\"{u}ser Boston, 37--47.

\bibitem{yorbis} M. \textsc{Yor}(2001).
Exponential Functionals of Brownian Motion and Related Processes.
\textit{Springer-Finance}
\end{thebibliography}

\begin{center}
\huge{\textbf{\underline{Acknowledgments}}}
\end{center}

What started it all is M. Qian's question (15/08/2007).

M. Yor is also very grateful to D. Madan and B. Roynette for several
attempts to solve various questions, rewriting, and summarizing...

J. Akahori gave the stimulation for Section 6.

C. Ewald sent the preprint \cite{ewald} very early on.

The graph of $\rho$ (see A-9) is due to G. Pagès.

Lecturing in Osaka and Ritsumeikan (October 2007), Melbourne and Sydney (December 2007), then finally at the Bachelier Séminaire (February 2008) has been a
great help.

We gave further lectures in Oxford and at Imperial College, London, both in May 2008 and in Amsterdam at VU University in June 2008.

\end{document}